\def\cl{\centerline}
\def\la{\lambda}
\def\a{\alpha}
\def\pa{\partial}
\def\vs{\vspace*}
\def\LL{\mathscr{L}}
\def\Z{\mathbb{Z}}
\def\C{\mathbb{C}}
\def\vs{\vspace*}
\numberwithin{equation}{section}
\newtheorem{theo}{Theorem}[section]
\newtheorem{defi}[theo]{Definition}
\newtheorem{lemm}[theo]{Lemma}
\newtheorem{prop}[theo]{Proposition}
\newtheorem{rem}[theo]{Remark}
\begin{document}
\begin{center}
{\bf\large Conformal modules and their extensions of a Lie conformal algebra related to a 2-dimensional Novikov algebra}
\footnote {$^{\,*}$Corresponding author: lmyuan@hit.edu.cn (Lamei Yuan )}
\end{center}

\cl{Lamei Yuan$^{\,*}$,Yanjie Wang}

\cl{\small School of Mathematics, Harbin Institute of Technology, Harbin
150001, China}

\cl{\small\footnotesize E-mails:lmyuan@hit.edu.cn 18s012006@stu.hit.edu.cn}
\vs{8pt}

{\small
\parskip .005 truein
\baselineskip 3pt \lineskip 3pt
\noindent{{\bf Abstract:} Let $\mathcal{R}$ be a free Lie conformal algebra of rank $2$ with $\C[\pa]$-basis $\{L,I\}$ and relations
\begin{eqnarray*}
\left[L_{\lambda} L\right]=(\partial+2 \lambda) (L+I),\ \left[L_{\lambda} I\right]=(\partial+\lambda) I, \ \left[I_{\lambda} L\right]=\lambda I,\ \left[I_{\lambda} I\right]=0.
\end{eqnarray*}
In this paper, we first classify all finite nontrivial irreducible conformal modules over $\mathcal{R}$. Then we determine extensions between two finite irreducible conformal $\mathcal{R}$-modules.
 \vs{5pt}

\noindent{\bf Key words:} Conformal modules, Extensions

\vs{5pt}

\noindent{\bf MR(2000) Subject Classification:}~ 17B65, 17B68

\parskip .001 truein\baselineskip 6pt \lineskip 6pt

\section{Introduction}
\vs{8pt}

Conformal module is a basic tool for the construction of free field realization of infinite-dimensional Lie (super)algebras in conformal field theory.
All finite irreducible conformal modules over the Virasoro, the Neveu-Schwarz and the current conformal algebras were studied and constructed in \cite{CK}. The same problem for a class of Lie conformal algebras of rank two, the Schr\"odinger-Virasoro conformal algebra and a class of Lie conformal algebras of Block type was solved in \cite{SXY,WY,YL}.

In general, conformal modules are not completely reducible. In order to understand the representation theory of Lie conformal algebras,
one is led to study the extension problem. Cheng, Kac and Wakimoto classified extensions between finite conformal modules over the Virasoro conformal algebra, the current conformal algebra and their semidirect sum in \cite{CKW1}. Later they further solved the same problem for the Neveu-Schwarz conformal algebra in \cite{CKW2}. Based on the
theory of conformal modules given in \cite{CK} and the techniques developed in \cite{CKW1}, Ngau Lam
classified extensions between finite conformal modules over the supercurrent conformal algebra in \cite{L}, and we did the same for the
Heisenberg-Virasoro and the (extended) Schr\"odinger-Virasoro conformal algebras in \cite{LY,YL}.

A Novikov
algebra $\mathcal{A}$ is a vector space over $\mathbb{C}$ with a bilinear product $(a, b) \rightarrow a b$ satisfying
\begin{eqnarray*}
(a b) c-a(b c)=(b a) c-b(a c),\
(a b) c=(a c) b,\ \forall \ a, b, c \in \mathcal{A}.
\end{eqnarray*}
Consider the space
$L(\mathcal{A})=\mathcal{A} \otimes \mathbb{C}\left[t, t^{-1}\right]$, whose elements are written as $a \otimes t^{m}$, where $a\in\mathcal{A}$ and $m\in\mathbb{Z}$. Denote $a[m] =a \otimes t^{m+1}$. Define
a bilinear product $[\cdot,\cdot]: L(\mathcal{A}) \times L(\mathcal{A}) \rightarrow L(\mathcal{A})$ by
\begin{eqnarray}\label{1}
[a[m], b[n]]=(m+1)(a b)[m+n]-(n+1)(b a)[m+n],\ \forall \ a,b\in\mathcal{A},\ m,n\in\mathbb{Z}.
\end{eqnarray}
It was shown in \cite{BN} that $L(\mathcal{A})$ is a Lie algebra if and only if $\mathcal{A}$ is a Novikov algebra. Then $L(\mathcal{A})$ is called the Lie algebra associated with $\mathcal{A}$.

Let $\mathcal{N}$ be the two-dimensional Novikov algebra with basis $\{e_1,e_2\}$ and relations
\begin{eqnarray}
e_1e_1=0, \ \ e_1e_2=e_1,\ \ e_2e_1=0, \ \ e_2e_2=e_1+e_2.
\end{eqnarray}
Set $L_{m}=e_{2}[m],$ $I_{m}=e_{1}[m]$.
By \eqref{1}, the associated Lie algebra $L(\mathcal{N})$ is an infinite-dimensional Lie algebra with basis $\{L_m, I_m|m\in\Z\}$ and commutation relations
\begin{eqnarray}\label{2}
[L_{m}, L_{n}]=(m-n) L_{m+n}+(m-n) I_{m+n},\ \left[L_{m}, I_{n}\right]=-(n+1) I_{m+n},\ \left[I_{m}, I_{n}\right]=0.
\end{eqnarray}
All derivations and central extensions of this Lie algebra were
determined in \cite{PB}. Considering the following $L(\mathcal{N})$-valued formal distributions
$$L(z)=\sum_{n \in \mathbb{Z}} L_{n} z^{-n-2},\quad I(z)=\sum_{n \in \mathbb{Z}} I_{n} z^{-n-2},$$
the commutation relations in \eqref{2} are equivalent to the following ones
\begin{eqnarray}
[L(z), L(w)]&=&\partial_{w} (L(w)+I(w)) \delta(z, w)+2 (L(w)+I(w)) \partial_{w}\delta(z, w),\label{lca1}\\
{[L(z), I(w)]}&=&\partial_{w} I(w) \delta(z, w)+I(w) \partial_{w} \delta(z, w),\label{lca2-1}\\
{[I(z), L(w)]}&=&I(w) \partial_{w} \delta(z, w),\label{lca2-2}\\
{[I(z), I(w)]}&=&0,\label{lca3}
\end{eqnarray}
where $\delta(z, w)=\sum_{n \in \mathbb{Z}} z^{-n-1} w^{n}$. Therefore, $\mathcal{F}=\{L(z),I(z)\}$ is a local family of $L(\mathcal{N})$-valued formal distributions and $L(\mathcal{N})$ is a formal distribution Lie algebra. Furthermore,
in term of the $\lambda$-bracket, relations \eqref{lca1}--\eqref{lca3} translate as follows:
\begin{eqnarray}\label{lam}
\left[L_{\lambda} L\right]=(\partial+2 \lambda) (L+I),\ \left[L_{\lambda} I\right]=(\partial+\lambda) I, \ \left[I_{\lambda} L\right]=\lambda I,\ \left[I_{\lambda} I\right]=0.
\end{eqnarray}
Denote $\mathcal{R}=\mathbb{C}[\partial] L \bigoplus \mathbb{C}[\partial] I$, which is a free $\mathbb{C}[\partial]$-module of rank 2.
One can check that $\mathcal{R}$ equipped
with the $\lambda$-brackets in \eqref{lam} forms a Lie conformal algebra.

In this paper, we aim to classify all finite irreducible conformal $\mathcal{R}$-modules and extensions between them.
In Section 2, we recall the notions of  conformal modules and their extensions, and some known results about Virasoro conformal modules. Section 3 is devoted to a classification of all finite irreducible conformal $\mathcal{R}$-modules by using the equivalent language of Lie conformal algebras and extended annihilation algebras, and some techniques developed in \cite{CK,WY}. We will show that any finite irreducible conformal module over $\mathcal{R}$ is free of rank one and of the form $V(\alpha,\Delta)=\C[\pa]v_{\Delta}$ with $\Delta \neq 0$ and the action of $\mathcal{R}$ given by
$L_\lambda
v_\Delta=(\partial+\alpha+\Delta \lambda)v_\Delta, \ I_\lambda v_\Delta=0.$ In Section 4, we study
 extensions of finite conformal $\mathcal{R}$-modules of the following types:
\begin{eqnarray}
&&0\longrightarrow \C{c_\gamma}\longrightarrow E \longrightarrow V(\alpha,\Delta) \longrightarrow 0,\label{stype1}\\
&&0\longrightarrow V(\alpha,\Delta)\longrightarrow E \longrightarrow \C c_\gamma \longrightarrow 0,\label{stype2}\\
&&0\longrightarrow V(\bar\alpha,\bar\Delta)\longrightarrow E \longrightarrow V(\alpha,\Delta) \longrightarrow 0, \label{stype3}
\end{eqnarray}
where $\C{c_\gamma}$ denotes the 1-dimensional $\mathcal{R}$-module with $L_\lambda c_\gamma=I_\lambda c_\gamma=0$ and $\partial c_\gamma=\gamma c_\gamma$. We will give explicit cocycles corresponding to trivial or nontrivial
extensions.

Throughout the paper, all vector spaces, tensor products and algebras are assumed to be over the field of complex numbers $\C$. In addition to the standard notation $\Z$, we use $\Z^+$ (resp. $\C^*$)  to denote the set of nonnegative integers (resp. nonzero complex numbers).

\section{Preliminaries}
\vs{8pt}
In this section, we first begin by recalling the definitions of Lie conformal algebras, conformal modules and their extensions, and some results that we need in this paper. For more details, the reader is referred to \cite{CK,CKW1,WY}.
\subsection{Lie conformal algebra}

\begin{defi}\rm
A Lie conformal algebra $R$ is a $\C[\partial]$-module endowed with a $\C$-bilinear map
$$ R\otimes R\rightarrow \C[\lambda]\otimes R,\ \  a\otimes b \mapsto [a_\lambda b],$$
called the $\la$-bracket, and
satisfying the following axioms ($a, b, c\in R$):
\begin{eqnarray}
[\partial a_\lambda b]&=&-\lambda[a_\lambda b],\ \ [ a_\lambda \partial b]=(\partial+\lambda)[a_\lambda b] \ \ \mbox{(conformal\  sesquilinearity)},\label{Lc1}\\
{[a_\lambda b]} &=& -[b_{-\lambda-\partial}a] \ \ \mbox{(skew-symmetry)},\label{Lc2}\\
{[a_\lambda[b_\mu c]]}&=&[[a_\lambda b]_{\lambda+\mu
}c]+[b_\mu[a_\lambda c]]\ \ \mbox{(Jacobi \ identity)}\label{Lc3}.
\end{eqnarray}
\end{defi}

Let $R$ be a Lie conformal algebra.
For each $j\in\Z^+$, we can define the {\it $j$-product} $a_{(j)}b$ of any two elements $a,b\in R$ by the following generating series:
\begin{equation}\label{jj1}
[a_{\lambda}b]=\sum_{j\in\Z^{+}}(a_{(j)}b)\frac{\lambda^j}{j!}.
\end{equation}
Then the following axioms of $j$-products hold:
\begin{equation}\label{lcaj}
\aligned
&a_{(n)}b=0,\ {\rm for}\ n\gg0,\\
&(\partial a)_{(n)}b=-na_{(n-1)}b,\\
&a_{(n)}b=\sum_{j\in\Z^{+}}(-1)^{n+j+1}\frac{\partial^j}{j!} b_{(n+j)}a,\\
&[a_{(m)},b_{(n)}]=\sum_{j=0}^m \begin{pmatrix}
m\\j
\end{pmatrix}(a_{(j)}b)_{(m+n-j)}.
\endaligned
\end{equation}
Actually, one can also define Lie conformal algebras using the language of $j$-products (c.f. \cite{Kac1}).

Consider the space $\widetilde{R}=R\bigotimes\C[t,t^{-1}]$ with $\widetilde{\partial}=\partial\otimes{\rm id}+{\rm id}\otimes\partial_t$, where $\rm id$ appearing on the left
(resp. right) of $\bigotimes$ is the identity operator acting on $R$ (resp. $\C[t,t^{-1}]$). This space is called the {\it affinization}
of $R$. Its generating elements can be written as $a\otimes t^m$, where $a\in R$ and $m\in\Z$. For clarity, we will use the notation $\widetilde{R}=R[t,t^{-1}]$, $a t^m$ for its elements and $\widetilde{\partial}=\partial+\partial_t$. By \eqref{lcaj}, we obtain a well defined commutation relation on $\widetilde{R}$:
\begin{equation}\label{ppp2}
[at^m, bt^n]=\sum_{j\in\Z^{+}}
\begin{pmatrix}
m\\j
\end{pmatrix}
(a_{(j)}b)t^{m+n-j}, \ \forall\ at^m,\, bt^n\in \widetilde{R},
\end{equation}
which gives $\widetilde{R}$ a structure of algebra, denoted by $( \widetilde{R}, [\cdot,\cdot])$.
It can be verified that the subspace $\widetilde{\partial}\widetilde{R}$ spanned by elements of the form $\{(\pa a) t^n+n a t^{n-1}| n\in\Z\}$ is a  two-sided ideal of the algebra $( \widetilde{R}, [\cdot,\cdot])$. Set
\begin{equation}
Lie(R)={\widetilde{R}}/{\widetilde{\partial}\widetilde{R}}.
\end{equation}
Let $a_{(m)}$ denote the image of $at^m$ in $Lie(R)$. Then $(\pa a)_{(n)}=-n a_{(n-1)}$. Define a bracket on $Lie(R)$ by
\begin{eqnarray}\label{lie}
[a_{(m)},b_{(n)}]=\sum_{j=0}^m \begin{pmatrix}
m\\j
\end{pmatrix}(a_{(j)}b)_{(m+n-j)},
\end{eqnarray}
for $a,b\in R$, $m, n\in\Z$. One can check that $(Lie(R),[\cdot,\cdot])$ is a Lie algebra with respect to \eqref{lie}. Note that $Lie(R)$ admits a derivation $\pa$ defined by $\pa(a_{(n)})=-n a_{(n-1)}$, for $a\in R$ and $n\in\Z$.
The Lie subalgebra $\textit{Lie}(R)^+={\rm span}_{\C}\{a_{(m)}\,|\,a\in R, m\in\Z^+\}$ is called the {\it annihilation algebra} of $R$.
The {\it extended annihilation algebra} $\textit{Lie}(R)^e$ is defined as the
semidirect product of the $1$-dimensional Lie subalgebra $\C \pa$ and $\textit{Lie}(R)^+$ with the action
$[\pa,a_{(n)}]=-na_{(n-1)}$.

\subsection{Conformal modules}

\begin{defi}\label{def1} \rm A {\it conformal module} $V$ over a Lie conformal algebra $R$
is a $\mathbb{C}[\partial]$-module endowed with a $\C$-linear map
$R\rightarrow {{\rm End_{\mathbb{C}}}(V) \bigotimes_{\mathbb{C}}\mathbb{C}[\lambda]}$, $a\mapsto a_\lambda $, satisfying the following conditions for all $a,b\in R$:
\begin{eqnarray*}
[a_\lambda,b_\mu]=a_\lambda b_\mu -b_\mu a_\lambda =[a_\lambda b]_{\lambda+\mu},\
(\partial a)_\lambda =[\partial,a_\lambda]=-\lambda a_\lambda.
\end{eqnarray*}
A conformal module $V$ over a Lie conformal algebra $R$ is called {\it finite}
if $V$ is finitely generated over
$\mathbb{C}[\partial]$. It is called {\it irreducible} if there is no nontrivial invariant subspace.
\end{defi}

Let $V$ be a conformal module over a Lie conformal algebra $R$.
An element $v$ in $V$ is called an {\it invariant} if $R_\la v=0$.
Denote by $V^0$ the subspace of invariants of $V$.
It is easy to see that $V^0$ is a conformal submodule of $V$.
If $V^0=V$, then $V$ is called a {\it trivial} $R$-module.
A trivial module just admits the structure of a $\C[\partial]$-module. The vector space $\C$ is viewed as a trivial module with trivial actions of both $\pa$ and $R$.
For any fixed complex constant $\alpha$, there
is a natural trivial $R$-module $\C c_\alpha$, such that $\C c_\alpha=\C$ and $\pa v=\alpha v,\ R_\la v=0$ for $v\in\C c_\alpha$.
The modules
$\C c_\alpha$ (with $\alpha\in\C^*$) exhaust all trivial irreducible $R$-modules.

The following result is due to \cite[Lemma 2.2]{Kac2}.\vs{-8pt}

\begin{lemm}
Let $R$ be a Lie conformal algebra and $V$ an $R$-module.\\
1) If $\partial v= a v$ for some $a\in \C$ and $v\in V$, then $R_\la v=0$.\\
2) If $V$ is a finite conformal module without any nonzero invariants, then $V$ is a free $\C[\partial]$-module.
\end{lemm}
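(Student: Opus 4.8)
The plan is to prove the two parts in sequence, using part 1) as the engine for part 2).

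For part 1), I would start from the module axiom $(\partial x)_\lambda=[\partial,x_\lambda]=-\lambda x_\lambda$, read as an identity of operators on $V$: for every $x\in R$ and $w\in V$,
\begin{equation*}
\partial(x_\lambda w)-x_\lambda(\partial w)=-\lambda\,x_\lambda w.
\end{equation*}
Specializing $w=v$ and inserting the hypothesis $\partial v=av$ (here $a\in\C$ is a scalar) yields $\partial(x_\lambda v)=(a-\lambda)(x_\lambda v)$. Since $x_\lambda v$ lies in $V\otimes\C[\lambda]$, I would expand it as a finite sum $x_\lambda v=\sum_{j=0}^{N}c_j\lambda^j$ with $c_j\in V$, and compare coefficients of $\lambda^k$ on both sides; this gives the recursion $\partial c_k=a c_k-c_{k-1}$ (with $c_{-1}=0$). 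Reading off the coefficient of $\lambda^{N+1}$ forces $c_N=0$. Hence, were $x_\lambda v$ nonzero of top $\lambda$-degree $N$, we would reach a contradiction, so $x_\lambda v=0$ for all $x\in R$, i.e. $R_\lambda v=0$.

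For part 2), I would use that $V$ is finitely generated over the principal ideal domain $\C[\partial]$, so it suffices to show that $V$ is torsion-free (a finitely generated torsion-free module over a PID is free). Suppose, for contradiction, that $V$ has a nonzero torsion element $w$, and let $p(\partial)$ be a generator of its annihilator ideal, chosen of minimal degree. Because $\C$ is algebraically closed, $p$ factors as $p(\partial)=(\partial-a)q(\partial)$ for some $a\in\C$. Setting $v=q(\partial)w$, minimality of $\deg p$ guarantees $v\neq0$, while $(\partial-a)v=p(\partial)w=0$ gives $\partial v=av$. By part 1), $R_\lambda v=0$, so $v$ is a nonzero invariant of $V$, contradicting the hypothesis. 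Therefore $V$ is torsion-free and hence free over $\C[\partial]$.

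The only delicate point is the descent argument in part 1): one must correctly interpret $[\partial,x_\lambda]=-\lambda x_\lambda$ as an operator relation and keep in mind that $x_\lambda v$ is polynomial in $\lambda$ with coefficients in $V$, so that matching the top $\lambda$-degree is legitimate. Once that is in place, part 2) is a routine application of the structure theorem for finitely generated modules over a PID together with the algebraic closedness of $\C$, which supplies the $\partial$-eigenvector needed to invoke part 1).
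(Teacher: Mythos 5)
Your proof is correct. The paper does not actually prove this lemma---it cites it from Kac's ``The idea of locality'' (\cite[Lemma~2.2]{Kac2})---but your argument is the standard one: part~1) is exactly the top-degree comparison in $\lambda$ applied to the operator identity $\partial(x_\lambda v)=(a-\lambda)x_\lambda v$, and part~2) reproduces the discussion the paper itself records immediately afterwards (a nonzero torsion element over $\C[\partial]$ yields a $\partial$-eigenvector, which by part~1) is a nonzero invariant, so the module is torsion-free and hence free over the PID $\C[\partial]$). No gaps.
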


An element $v\in V$ is called a {\it torsion element} if there exists a nonzero polynomial $p(\partial)\in\C[\partial]$ such that $p(\partial)v=0$.
For any $\C[\partial]$-module $V$, there exists a nonzero torsion element if and only if there exists a nonzero $v\in V$
such that $\partial v=a v$ for some $a\in\C$.
A finitely generated $\C[\partial]$-module $V$ is free if and only if $0$ is the only torsion element of $V$.

We deduce from the above discussion the following important result.

\begin{lemm}\label{I}
Let $R$ be a Lie conformal algebra.
Then every finite nontrivial irreducible $R$-module has no nonzero torsion element and
is a free $\C[\partial]$-module.
\end{lemm}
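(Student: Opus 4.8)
The plan is to derive both conclusions from the two facts already recorded: the dichotomy that the invariant subspace $V^0$ is a conformal submodule, so by irreducibility it equals either $0$ or $V$; and the chain of equivalences relating torsion elements, $\partial$-eigenvectors, invariants, and freeness. Throughout, let $V$ be a finite nontrivial irreducible $R$-module.

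First I would rule out nonzero torsion by contradiction. Suppose $v\in V$ is a nonzero torsion element. By the equivalence stated above, the existence of such an element is the same as the existence of a nonzero $v'\in V$ with $\partial v'=av'$ for some $a\in\C$, so I may work with such a $\partial$-eigenvector. Part 1) of the preceding lemma then gives $R_\la v'=0$, meaning $v'$ is a nonzero invariant, so $V^0\neq 0$. Since $V^0$ is a conformal submodule and $V$ is irreducible, the only possibilities are $V^0=0$ and $V^0=V$; the first is now excluded, so $V^0=V$, i.e. $V$ is trivial, contradicting the hypothesis. Hence $0$ is the only torsion element of $V$.

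Finally, because $V$ is finitely generated over $\C[\partial]$ and has no nonzero torsion, the recorded criterion that a finitely generated $\C[\partial]$-module is free precisely when $0$ is its only torsion element immediately yields that $V$ is free. (Equivalently, once one knows $V^0=0$ one may instead invoke part 2) of the lemma directly.) I expect no serious obstacle here: the argument is essentially bookkeeping that stitches together the stated equivalences, and the only point requiring a little care is the passage from an arbitrary torsion element to a genuine $\partial$-eigenvector, which is exactly what the quoted torsion criterion supplies.
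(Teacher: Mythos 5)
Your argument is correct and is precisely the deduction the paper intends: it gives no explicit proof, saying only that the lemma follows "from the above discussion," and your proof stitches together exactly those recorded facts (the torsion/eigenvector equivalence, part 1) of the quoted lemma, the submodule property of $V^0$ with irreducibility, and the freeness criterion for finitely generated $\C[\partial]$-modules). No gaps; the parenthetical alternative via part 2) is also valid.
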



Assume that $V$ is a conformal module over $R$.
We can also define {\it $j$-actions} of $R$ on $V$ using the following generating series
\begin{equation}
a_{\lambda}v=\sum_{j\in\Z^{+}}(a_{(j)}v)\frac{\lambda^j}{j!}.
\end{equation}
The $j$-actions satisfy relations similar to those in (\ref{lcaj}).

It is immediate to see that a conformal module $V$ over a Lie conformal algebra $R$ is the same as a module over the extended annihilation algebra $\textit{Lie}(R)^e$ satisfying the following local nilpotent condition
\begin{equation}\label{conformal}
a_{(n)}v=0,\ \ for\ \ v\in V,\ a\in R, \ n\gg 0.
\end{equation}
A $\textit{Lie}(R)^e$-module satisfying this condition is called {\it conformal}.

It was shown in
\cite{CK} that all free nontrivial Virasoro conformal modules of rank one
over $\mathbb{C}[\partial]$ are the following ones $(\Delta,
\alpha\in \mathbb{C})$:
\begin{eqnarray}
M(\alpha,\Delta)=\mathbb{C}[\partial]v_\Delta,\ \ L_\lambda
v_\Delta=(\partial+\alpha+\Delta \lambda)v_\Delta.
\end{eqnarray}
The module $M(\alpha,\Delta)$ is irreducible if and only if
$\Delta\neq 0$. The module $M(\alpha,0)$ contains a unique
nontrivial submodule $(\partial +\alpha)M(\alpha,0)$ isomorphic to
$M(\alpha,1).$  Moreover, the modules $M(\alpha,\Delta)$ with
$\Delta\neq 0$ exhaust all finite non-1-dimensional irreducible
Virasoro conformal modules. Therefore $M(\alpha,\Delta)$ with $\Delta\neq 0$, together with the one-dimensional modules $\C c_\beta$ ($\beta\in\C^*$), form a complete list of finite irreducible conformal modules over the Virasoro conformal algebra.

\subsection{Extensions}
Let $V$ and $W$ be two modules over a Lie conformal algebra (or a Lie algebra) $R$. An {\it extension} of $W$ by $V$ is an exact sequence of $R$-modules of the form
\begin{eqnarray}\label{Em}
0\longrightarrow V\xlongrightarrow{i} E \xlongrightarrow{p} W \longrightarrow 0.
\end{eqnarray}
Two extensions $0\longrightarrow V\xlongrightarrow{i} E \xlongrightarrow{p} W \longrightarrow 0$ and $0\longrightarrow V\xlongrightarrow{i'} E' \xlongrightarrow{p'} W \longrightarrow 0$ are said to be {\it equivalent} if there exists a commutative diagram of the form
\begin{equation*}
\begin{CD}
0@>>> V @>i>{\rm }>  E @>p>> W@>>> 0\\
@. @V{1_V}VV @V\psi VV @V{1_W}VV\\
0@>>> V @>i'>{\rm }> E' @>p'>>
W @>>> 0,
\end{CD}
\end{equation*}
where $1_V: V\rightarrow V$ and $1_W: W\rightarrow W$ are the respective identity maps and $\psi: E\rightarrow E'$ is a homomorphism of modules.

The direct sum of modules $V\oplus W$ obviously gives rise to an extension. Extensions equivalent to it are called {\it trivial extensions}. In general, an extension can be thought of as the direct sum of vector spaces $E=V\oplus W$, where $V$ is a submodule of $E$, while for $w$ in $W$ we have:
\begin{equation*}
a\cdot w=aw+\phi_a(w),\ \ a\in R,
\end{equation*}
where $\phi_a:W\rightarrow V$ is a linear map satisfying the cocycle condition: $$\phi_{[a,b]}(w)=\phi_a(bw)+a\phi_b(w)-\phi_b(aw)-b\phi_a(w),\ \, b\in R.$$ The set of these cocycles form a vector space over $\C$. Cocycles equivalent to the trivial extension are called {\it coboundaries}. They form a subspace and the quotient space by it is denoted by $\textrm{Ext}(W, V).$

In \cite{CKW1}, extensions of finite irreducible Virasoro conformal modules of the following three types have been classified ($\Delta, \bar\Delta\in\C^*$):
\begin{eqnarray}
&&0\longrightarrow \C{c_\gamma}\longrightarrow E \longrightarrow M(\alpha,\Delta) \longrightarrow 0,\label{type1}\\
&&0\longrightarrow M(\alpha,\Delta)\longrightarrow E \longrightarrow \C c_\gamma \longrightarrow 0,\label{type2}\\
&&0\longrightarrow M(\bar\alpha,\bar\Delta)\longrightarrow E \longrightarrow M(\alpha,\Delta) \longrightarrow 0. \label{type3}
\end{eqnarray}
We list the corresponding results in the following three theorems.

\begin{theo}\label{th2}
Nontrivial extensions of the form \eqref{type1}\ exist if and only if  $\alpha+\gamma=0$ and $\Delta=1$ or $2$. In these cases, they are given (up to equivalence) by $$L_\lambda v_\Delta=(\partial+\alpha+\Delta\lambda)v_\Delta+f(\lambda)c_\gamma,$$ where
\begin{itemize}
\item[{\rm (i)}] $f(\lambda)=c_2\lambda^2$, for $\Delta=1$ and $c_2\neq0$.
\item [{\rm (ii)}] $f(\lambda)=c_3\lambda^3$, for $\Delta=2$ and $c_3\neq0$.
\end{itemize}
Furthermore, all trivial cocycles are given by scalar multiples of the polynomial $f(\lambda)=\alpha+\gamma+\Delta\lambda$.
\end{theo}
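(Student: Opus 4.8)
The plan is to reduce the classification to a functional equation for a single polynomial and then solve it. Realizing the underlying space of an extension \eqref{type1} as a direct sum of $\C[\pa]$-modules, $E=\C c_\gamma\oplus\C[\pa]v_\Delta$, with $\C c_\gamma$ the submodule and $v_\Delta$ a lift of the canonical generator of $M(\alpha,\Delta)$, the only freedom in the module structure sits in the action on $v_\Delta$. Since $\C c_\gamma$ is one dimensional with $\pa$ acting as the scalar $\gamma$ and $L_\lambda c_\gamma=0$, the most general action compatible with the prescribed sub- and quotient modules is
\begin{equation*}
L_\lambda v_\Delta=(\pa+\alpha+\Delta\lambda)v_\Delta+f(\lambda)c_\gamma,\qquad f(\lambda)\in\C[\lambda].
\end{equation*}
Thus an extension of the form \eqref{type1} is encoded by a single polynomial $f$, and classifying them up to equivalence means determining the $f$'s allowed by the axioms, modulo those coming from a change of lift.

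First I would record the two constraints on $f$. Changing the splitting by $v_\Delta\mapsto v_\Delta+t\,c_\gamma$ ($t\in\C$) sends $f(\lambda)\mapsto f(\lambda)-t(\alpha+\gamma+\Delta\lambda)$, so the coboundaries are exactly the scalar multiples of $\alpha+\gamma+\Delta\lambda$, which already yields the last assertion of the theorem. For the cocycle condition I would impose the commutator axiom $[L_\lambda,L_\mu]v_\Delta=[L_\lambda L]_{\lambda+\mu}v_\Delta=(\lambda-\mu)L_{\lambda+\mu}v_\Delta$ arising from the Virasoro relation $[L_\lambda L]=(\pa+2\lambda)L$. The $M(\alpha,\Delta)$-components on both sides agree automatically (they merely reflect that the quotient is the genuine Virasoro module), so the new information is the $c_\gamma$-component, which, writing $s=\alpha+\gamma$, reads
\begin{equation*}
(s+\lambda+\Delta\mu)f(\lambda)-(s+\mu+\Delta\lambda)f(\mu)=(\lambda-\mu)f(\lambda+\mu).
\end{equation*}

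Next I would solve this identity for polynomials $f$. Setting $\mu=0$ gives $s\,f(\lambda)=(s+\Delta\lambda)f(0)$: if $s\neq0$ then $f$ is a multiple of $s+\Delta\lambda$, hence a coboundary, so nontrivial extensions force $\alpha+\gamma=0$, and then $f(0)=0$ (as $\Delta\neq0$). Under $s=0$ I would differentiate the identity in $\mu$ and set $\mu=0$, obtaining the first-order relation
\begin{equation*}
(\Delta+1)f(\lambda)-\lambda f'(\lambda)=\Delta f'(0)\,\lambda,
\end{equation*}
whose polynomial solutions are $f(\lambda)=a_1\lambda+a_{\Delta+1}\lambda^{\Delta+1}$, the second term present only when $\Delta$ is a positive integer. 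Since $a_1\lambda$ is a coboundary, the nontrivial part is the monomial $\lambda^{\Delta+1}$; substituting it back into the full two-variable identity shows it is an honest solution precisely when $\Delta=1$ (giving $f=c_2\lambda^2$) or $\Delta=2$ (giving $f=c_3\lambda^3$), and fails for every $\Delta\geq3$.

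The routine parts are the bracket computation producing the functional equation and the bookkeeping of coboundaries. The step I expect to carry the real weight is solving the functional equation: the differentiation-at-$\mu=0$ trick is what collapses the a priori unbounded space of polynomials to the two-parameter family $a_1\lambda+a_{\Delta+1}\lambda^{\Delta+1}$, after which only a finite check remains. The subtlety to watch is that this reduced relation is merely necessary; the restriction to $\Delta\in\{1,2\}$ genuinely emerges only upon reinserting $\lambda^{\Delta+1}$ into the original two-variable identity, where a residual term (of the shape $\lambda^2\mu^2(\lambda-\mu)$ at $\Delta=3$, and analogously beyond) obstructs the identity and rules out all higher cases.
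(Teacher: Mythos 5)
Your proposal is correct. Note that the paper does not actually prove Theorem \ref{th2}: it is imported from Cheng--Kac--Wakimoto \cite{CKW1}. The closest argument the paper itself carries out is in Section 4.1, where the identical one-variable cocycle equation (there \eqref{l11-1}, with $\alpha+\gamma$ already forced to zero) is solved en route to Theorem \ref{p2}. Your reduction to that equation --- realizing $E$ as $\C c_\gamma\oplus\C[\partial]v_\Delta$, reading off the coboundaries $t(\alpha+\gamma+\Delta\lambda)$ from a change of lift, and extracting the $c_\gamma$-component of $[L_\lambda,L_\mu]v_\Delta=(\lambda-\mu)L_{\lambda+\mu}v_\Delta$, including the $\mu=0$ specialization that kills the case $\alpha+\gamma\neq0$ --- coincides with the paper's setup. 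Where you genuinely diverge is in solving the equation: the paper writes $f=\sum_{i=0}^n b_i\lambda^i$, assumes $n\geq4$, and compares the coefficient of $\mu^2\lambda^{n-1}$ to force $\binom{n}{2}=\binom{n}{1}$, hence $n\leq3$, after which degrees $0$ through $3$ are checked by hand (Lemma \ref{lem1-1}); you instead apply $\partial_\mu|_{\mu=0}$ to get the first-order relation $(\Delta+1)f(\lambda)-\lambda f'(\lambda)=\Delta f'(0)\lambda$, which pins the solution space down to $a_1\lambda+a_{\Delta+1}\lambda^{\Delta+1}$ and explains a priori why the admissible degree is tied to $\Delta$, leaving only the reinsertion check (where the obstruction for $\Delta=n\geq3$ is the nonvanishing coefficient $\binom{n+1}{2}-\binom{n+1}{1}$ of $\lambda^{n}\mu^{2}$). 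Your closing caveat, that the differentiated relation is only necessary, is exactly the right point to flag; with the reinsertion step included, the argument is complete and slightly more economical than the coefficient-bounding route.
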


\begin{theo}\label{th3}
Nontrivial extensions of Virasoro conformal modules of the form \eqref{type2} exist if and only if $\alpha+\gamma=0$ and $\Delta=1$. These extensions are given (up to equivalence) by

\begin{eqnarray*}
L_\lambda c_\gamma=f(\partial,\lambda)v_\Delta,\ \
\partial c_\gamma=\gamma c_\gamma+h(\partial)v_\Delta,
\end{eqnarray*}
where $f(\partial,\lambda)=h(\partial)=a_0\in\C^*$.
\end{theo}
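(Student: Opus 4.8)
The plan is to realize such an extension on the vector space $E=\C[\partial]v_\Delta\oplus\C c_\gamma$, in which $M(\alpha,\Delta)=\C[\partial]v_\Delta$ is a submodule, and to encode the remaining structure by two unknown polynomials: $f(\partial,\lambda)$, recording the twisted action $L_\lambda c_\gamma=f(\partial,\lambda)v_\Delta$, and $h(\partial)$, recording the twisted derivation $\partial c_\gamma=\gamma c_\gamma+h(\partial)v_\Delta$. Since the image of $c_\gamma$ in the quotient $\C c_\gamma$ is killed by $L_\lambda$ and carries the scalar action $\partial=\gamma$, these are the only slots in which a cocycle can appear. I would then impose the two defining axioms of a conformal module on the generator $c_\gamma$ and convert them into polynomial identities.

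The sesquilinearity relation $L_\lambda(\partial c_\gamma)=(\partial+\lambda)(L_\lambda c_\gamma)$, combined with the rule $L_\lambda\bigl(p(\partial)v_\Delta\bigr)=p(\partial+\lambda)(\partial+\alpha+\Delta\lambda)v_\Delta$, gives the first identity
\[
h(\partial+\lambda)(\partial+\alpha+\Delta\lambda)=(\partial+\lambda-\gamma)\,f(\partial,\lambda).
\]
The commutator relation $[L_\lambda,L_\mu]c_\gamma=(\lambda-\mu)L_{\lambda+\mu}c_\gamma$ gives the second identity
\[
f(\partial+\lambda,\mu)(\partial+\alpha+\Delta\lambda)-f(\partial+\mu,\lambda)(\partial+\alpha+\Delta\mu)=(\lambda-\mu)f(\partial,\lambda+\mu).
\]
The decisive step is to substitute $\partial=\gamma-\lambda$ into the first identity: its right-hand side vanishes, forcing $h(\gamma)\bigl((\alpha+\gamma)+(\Delta-1)\lambda\bigr)=0$ for all $\lambda$. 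Hence either $h(\gamma)=0$, or else $\Delta=1$ and $\alpha+\gamma=0$.

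In the first branch, $(\partial+\lambda-\gamma)$ divides $h(\partial+\lambda)$, so writing $h(\partial)=(\partial-\gamma)g(\partial)$ and solving the first identity yields $f(\partial,\lambda)=g(\partial+\lambda)(\partial+\alpha+\Delta\lambda)$; I would then verify that the coboundary induced by $c_\gamma\mapsto c_\gamma-g(\partial)v_\Delta$ simultaneously kills $h$ and $f$, so this branch produces only trivial extensions. In the second branch the factor $\partial+\alpha+\Delta\lambda$ becomes $\partial+\lambda-\gamma$, so the first identity collapses to $f(\partial,\lambda)=h(\partial+\lambda)$, and a direct substitution shows the second identity then holds identically. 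Finally I would compute the coboundaries: the replacement $c_\gamma\mapsto c_\gamma+q(\partial)v_\Delta$ sends $h\mapsto h+(\partial-\gamma)q$ and $f\mapsto f+q(\partial+\lambda)(\partial+\lambda-\gamma)$, so modulo coboundaries $h$ may be reduced to the constant $a_0=h(\gamma)$, whence $f=h=a_0$. The extension is nontrivial precisely when $a_0\neq0$, which reproduces the stated classification.

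The step I expect to be the main obstacle is the case analysis emerging from the first identity, and in particular the clean verification that the branch $h(\gamma)=0$ contributes nothing beyond coboundaries. The care lies in the bookkeeping of how a single coboundary $q(\partial)$ acts on $f$ and on $h$ at the same time, and in checking that these two shifts remain compatible with the first identity, so that the reduction of $h$ to a constant is legitimate.
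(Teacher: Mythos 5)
Your argument is correct. Note first that the paper does not actually prove Theorem \ref{th3} --- it is imported from \cite{CKW1} --- so the natural comparison is with the paper's proof of the analogous statement for $\mathcal{R}$ (Theorem \ref{theo2}), which follows the same template. Both you and the paper start from the identical pair of functional equations (the paper has an extra, trivially solved, equation for $I$), but the engines of the two arguments differ. The paper homogenizes by the shift $\bar\partial=\partial+\alpha$ and then drives the case split from the commutator equation: setting $\mu=0$ in \eqref{s9} forces $\bar f$ either to have the shape of a coboundary or to be a constant $c$, and only afterwards is the sesquilinearity equation \eqref{s10} used to extract $\bar a$ and the constraints $\alpha+\gamma=0$, $\Delta=1$. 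You instead drive everything from the sesquilinearity identity, evaluating at $\partial=\gamma-\lambda$ to split on whether $h(\gamma)$ vanishes; the branch $h(\gamma)=0$ is killed by the explicit coboundary $c_\gamma\mapsto c_\gamma-g(\partial)v_\Delta$, while the branch $h(\gamma)\neq0$ forces $\alpha+\gamma=0$, $\Delta=1$ and $f(\partial,\lambda)=h(\partial+\lambda)$, after which the commutator equation is verified to hold identically and plays no selective role. Your route buys a cleaner logical structure (one equation does all the work, the other is a consistency check) and avoids the divisibility analysis hidden in the paper's ``$\bar f$ is a scalar multiple of $(\bar\partial+\Delta\lambda)\bar f(\bar\partial+\lambda,0)$'' step; the paper's route generalizes more directly to the rank-two situation where the $I$-action contributes additional equations. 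Your bookkeeping of how a single coboundary $q(\partial)$ shifts both $h$ and $f$, and the final reduction of $h$ to the constant $a_0=h(\gamma)$, are exactly right.
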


\begin{theo}\label{th4} Nontrivial extensions of Virasoro conformal modules of the form \eqref{type3} exist only if $\alpha=\bar\alpha$ and $\Delta-\bar\Delta=0,2,3,4,5,6.$ In these cases, they are given (up to equivalence) by $$L_\lambda v_\Delta=(\partial+\alpha+\Delta\lambda)v_\Delta+f(\partial,\lambda)v_{\bar\Delta },$$
where the values of $\Delta$ and $\bar\Delta$ along with the corresponding polynomials $f(\pa,\la)$ whose nonzero scalar multiples give rise to nontrivial extensions are listed as follows ( $\bar\partial=\partial+\alpha$):
\begin{itemize}
\item[{\rm (i)}] $\Delta=\bar\Delta, f(\pa,\la)=c_0+c_1\la, (c_0,c_1)\neq(0,0).$
\item [{\rm (ii)}]$\Delta-\bar\Delta=2, f(\pa,\la)=\la^2(2\bar\pa+\la)$.
\item [{\rm (iii)}]$\Delta-\bar\Delta=3, f(\pa,\la)=\bar\pa\la^2(\bar\pa+\la)$.
\item [{\rm (iv)}]$\Delta-\bar\Delta=4, f(\pa,\la)=\la^2(4\bar\pa^3+6\bar\pa^2\la-\bar\pa\la^2+\bar\Delta\la^3)$.
\item [{\rm (v)}]$(\Delta,\bar\Delta)=(1,-4), f(\pa,\la)=\bar\pa^4\la^2-10\bar\pa^2\la^4-17\bar\pa\la^5-8\la^6$.
\item [{\rm (vi)}]$(\Delta,\bar\Delta)=(\frac72\pm\frac{\sqrt{19}}2, -\frac52\pm\frac{\sqrt{19}}2),  f(\pa,\la)=\bar\pa^4\la^3-(2\bar\Delta+3)\bar\pa^3\la^4-3\bar\Delta\bar\pa^2\la^5-(3\bar\Delta+1)\bar\pa\la^6-(\bar\Delta+\frac9{28})\la^7$.
\end{itemize}
\end{theo}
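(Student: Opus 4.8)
The plan is to make the extension \eqref{type3} explicit and reduce the whole classification to a single polynomial functional equation. Realize $E=M(\bar\alpha,\bar\Delta)\oplus M(\alpha,\Delta)$ as $\C[\partial]$-modules with $M(\bar\alpha,\bar\Delta)$ the submodule; since $M(\alpha,\Delta)=\C[\partial]v_\Delta$ is free of rank one, the extension is completely encoded by the action on $v_\Delta$, which must take the form
$$L_\lambda v_\Delta=(\partial+\alpha+\Delta\lambda)v_\Delta+f(\partial,\lambda)v_{\bar\Delta},\qquad f(\partial,\lambda)\in\C[\partial,\lambda].$$
First I would impose the single defining relation $[L_\lambda,L_\mu]v_\Delta=(\lambda-\mu)L_{\lambda+\mu}v_\Delta$ of the Virasoro conformal algebra. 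Using conformal sesquilinearity $L_\lambda(\partial w)=(\partial+\lambda)L_\lambda w$ to commute $L_\lambda$ past the polynomial coefficients, together with the known action $L_\lambda v_{\bar\Delta}=(\partial+\bar\alpha+\bar\Delta\lambda)v_{\bar\Delta}$ on the submodule, the $v_{\bar\Delta}$-component yields the cocycle equation
\begin{align*}
&(\partial+\lambda+\alpha+\Delta\mu)f(\partial,\lambda)+(\partial+\bar\alpha+\bar\Delta\lambda)f(\partial+\lambda,\mu)\\
&\quad-(\partial+\mu+\alpha+\Delta\lambda)f(\partial,\mu)-(\partial+\bar\alpha+\bar\Delta\mu)f(\partial+\mu,\lambda)=(\lambda-\mu)f(\partial,\lambda+\mu),
\end{align*}
while the $v_\Delta$-component is automatically satisfied. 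Thus the problem becomes: solve this equation for $f$ modulo coboundaries.

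Next I would identify the coboundaries so as to work in $\textrm{Ext}$ rather than with raw cocycles. A change of generator $v_\Delta\mapsto v_\Delta-g(\partial)v_{\bar\Delta}$ with $g\in\C[\partial]$ replaces $f$ by $f+(\partial+\alpha+\Delta\lambda)g(\partial)-(\partial+\bar\alpha+\bar\Delta\lambda)g(\partial+\lambda)$, so two cocycles are equivalent exactly when they differ by such a term. A clean first reduction comes from setting $\mu=0$: the $f(\partial,\lambda)$-terms collapse and one is left with $(\alpha-\bar\alpha)f(\partial,\lambda)=(\partial+\alpha+\Delta\lambda)f(\partial,0)-(\partial+\bar\alpha+\bar\Delta\lambda)f(\partial+\lambda,0)$. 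Analyzing this shows that a nonzero extension forces $\alpha=\bar\alpha$, and moreover that $f(\partial,0)=0$ unless $\Delta=\bar\Delta$ (so $\lambda\mid f$ in the unequal cases). Having established $\alpha=\bar\alpha$, I would normalize by the substitution $\partial\mapsto\partial-\alpha$ to assume $\alpha=\bar\alpha=0$, recording the answers in the shifted variable $\bar\partial=\partial+\alpha$ as in the statement.

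The core of the argument is then solving the (now $\alpha$-free) functional equation. Assigning $\partial,\lambda,\mu$ all degree one makes every term homogeneous, so I may assume $f$ is homogeneous of some degree $d$ in $(\partial,\lambda)$ and treat each degree separately; its $d+1$ coefficients then satisfy a finite linear system whose entries are polynomials in $\Delta,\bar\Delta$. After reducing by the (likewise homogeneous) coboundaries, comparing leading terms ties the degree $d$ to the value of $\Delta-\bar\Delta$, and running the linear algebra shows that nonzero classes survive precisely for $\Delta-\bar\Delta\in\{0,2,3,4,5,6\}$, with the displayed $f(\partial,\lambda)$ as representatives (the $\Delta=\bar\Delta$ case producing the low-degree solutions $c_0+c_1\lambda$). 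The main obstacle is the top degrees: for the cases $\Delta-\bar\Delta=5,6$ solvability of the system is no longer automatic but requires the vanishing of a further resultant, imposing a genuine polynomial constraint on $\Delta$ and $\bar\Delta$ themselves; it is exactly this constraint that isolates the exceptional pairs $(\Delta,\bar\Delta)=(1,-4)$ and $(\tfrac72\pm\tfrac{\sqrt{19}}2,-\tfrac52\pm\tfrac{\sqrt{19}}2)$. Careful bookkeeping of these recursions, followed by a direct check that each listed $f$ solves the cocycle equation and is not a coboundary, completes the classification.
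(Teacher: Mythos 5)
First, a point of comparison: the paper does not actually prove Theorem \ref{th4} --- it is imported verbatim from \cite[Theorem 2.4]{CKW2} (see also \cite[Theorem 3.2]{CKW1}), with the reducible cases $\Delta=0$ or $\bar\Delta=0$ deleted --- so there is no in-paper proof to measure you against. Your outline is, however, a faithful reconstruction of the Cheng--Kac--Wakimoto strategy that those references (and Section 4.2 of this paper, for the analogous equation \eqref{L3-3}) actually use: the cocycle equation and coboundary formula you derive agree with \eqref{wl} (with $g=0$) and with Lemma \ref{lem4}; the $\mu=0$ specialization correctly shows every cocycle with $\alpha\neq\bar\alpha$ is a coboundary and, using $\bar\Delta\neq0$, forces $f(\partial,0)$ to be a constant that vanishes unless $\Delta=\bar\Delta$; and grading by total degree, with the degree of a nontrivial homogeneous cocycle pinned to $\Delta-\bar\Delta+1$ (the content of Lemma \ref{important}), is the right organizing principle, as is the observation that only in the top degrees does solvability impose an extra polynomial condition on $(\Delta,\bar\Delta)$ itself. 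The one substantive caveat: everything that distinguishes this theorem from a routine exercise --- the explicit linear systems in each degree, the verification that $\Delta-\bar\Delta=1$ yields only coboundaries (the case excluded from Lemma \ref{important}(1) when $m=2$), and the computation of the constraints producing the exceptional pairs $(1,-4)$ and $(\frac72\pm\frac{\sqrt{19}}2,-\frac52\pm\frac{\sqrt{19}}2)$ --- is asserted rather than performed (``running the linear algebra shows\dots'', ``careful bookkeeping\dots completes the classification''). As written this is a correct and well-structured proof plan rather than a proof; to complete it you would either carry out those computations in each of the seven degrees or, as the authors do, simply cite \cite{CKW1,CKW2}.
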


\begin{rem} {\rm We rewrite \cite[Theorem 2.4]{CKW2} (see also \cite[Theorem 3.2]{CKW1}) as Theorem \ref{th4} by removing the cases $\Delta=0$ or $\bar\Delta=0$. The reason is that both $M(\alpha,\Delta)$ and $M(\bar\alpha,\bar\Delta)$ in \eqref{type3} are irreducible Virasoro conformal modules.}
\end{rem}

\section{Classification of finite nontrivial irreducible $\mathcal{R}$-modules}
This section is devoted to classification of all finite nontrivial irreducible conformal modules over the Lie conformal algebra $\mathcal{R}$.

\begin{prop}\label{key1}
\begin{itemize}
\item[{\rm (1)}]
All nontrivial conformal $\mathcal{R}$-modules of rank 1
are the following ($\Delta,\a\in\C$):
\begin{eqnarray}\label{key-key}
V(\alpha,\Delta)=\mathbb{C}[\partial]v,\quad L_\lambda
v=(\partial+\alpha+\Delta \lambda)v, \quad
I_\lambda v=0.
\end{eqnarray}
\item[{\rm (2)}]
The module
$V(\alpha,\Delta)$ is irreducible if and only if $\Delta\neq 0$. Moreover, the module $V(\alpha,0)$ contains a unique
nontrivial submodule $(\partial +\alpha)V(\alpha,0)$ isomorphic to
$V(\alpha,1).$
\end{itemize}
\end{prop}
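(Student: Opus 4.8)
The plan is to pin down the two polynomials that govern the action and to show that $I$ must act by zero. Since a rank-one module is free, I write $V=\C[\partial]v$ and set $L_\lambda v=P(\partial,\lambda)v$ and $I_\lambda v=Q(\partial,\lambda)v$ with $P,Q\in\C[\partial,\lambda]$. Using the module axiom $a_\lambda\circ f(\partial)=f(\partial+\lambda)\circ a_\lambda$ together with $[a_\lambda,b_\mu]=[a_\lambda b]_{\lambda+\mu}$, I translate the brackets in \eqref{lam} into polynomial identities. Only two of them are needed. The relation $[I_\lambda L]=\lambda I$ gives
\begin{equation*}
P(\partial+\lambda,\mu)Q(\partial,\lambda)-Q(\partial+\mu,\lambda)P(\partial,\mu)=\lambda\,Q(\partial,\lambda+\mu),
\end{equation*}
and the relation $[L_\lambda L]=(\partial+2\lambda)(L+I)$ gives
\begin{equation*}
P(\partial+\lambda,\mu)P(\partial,\lambda)-P(\partial+\mu,\lambda)P(\partial,\mu)=(\lambda-\mu)\big(P(\partial,\lambda+\mu)+Q(\partial,\lambda+\mu)\big).
\end{equation*}

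The crux is to prove $Q=0$. First I would put $\mu=0$ in the first identity, which collapses to $Q(\partial,\lambda)\big(P(\partial+\lambda,0)-P(\partial,0)-\lambda\big)=0$. As $\C[\partial,\lambda]$ is a domain, either $Q=0$ (and we are done) or $P(\partial+\lambda,0)-P(\partial,0)=\lambda$, which forces $P(\partial,0)=\partial+\alpha$ for some $\alpha\in\C$. In the latter case I substitute $\mu=0$ into the second identity; its left-hand side becomes $(\partial+\lambda+\alpha)P(\partial,\lambda)-P(\partial,\lambda)(\partial+\alpha)=\lambda P(\partial,\lambda)$, so the identity reads $\lambda P=\lambda P+\lambda Q$ and again $Q=0$. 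Hence $I_\lambda v=0$ unconditionally.

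Once $Q=0$, the second identity is exactly the Virasoro functional equation $P(\partial+\lambda,\mu)P(\partial,\lambda)-P(\partial+\mu,\lambda)P(\partial,\mu)=(\lambda-\mu)P(\partial,\lambda+\mu)$, and a nontrivial module has $P\neq0$; so $V$ is a nontrivial free rank-one Virasoro conformal module and the result of \cite{CK} recalled above yields $P(\partial,\lambda)=\partial+\alpha+\Delta\lambda$. A direct check of all four brackets (those involving $I$ act as $0$ on both sides) shows conversely that every $V(\alpha,\Delta)$ is an $\mathcal{R}$-module, completing (1). For (2), I would note that since $I$ acts trivially, a $\C[\partial]$-submodule of $V(\alpha,\Delta)$ is $\mathcal{R}$-invariant precisely when it is invariant under $\partial$ and $L$, i.e.\ exactly when it is a Virasoro submodule of $M(\alpha,\Delta)$. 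The two submodule lattices therefore coincide, and the cited Virasoro facts transfer verbatim: $V(\alpha,\Delta)$ is irreducible iff $\Delta\neq0$, and $V(\alpha,0)$ has the unique nontrivial submodule $(\partial+\alpha)V(\alpha,0)\cong V(\alpha,1)$.

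I expect the vanishing of $Q$ to be the only delicate step. The obstruction is that the self-bracket $[L_\lambda L]$ returns $L+I$ rather than $L$, so the $L$-action does not obviously obey the Virasoro relation and \cite{CK} cannot be invoked at the outset; the specialization $\mu=0$ is what both decouples the system and eliminates the $Q$-term, and it is the single place where the precise coefficients of \eqref{lam} are used. The remaining ingredients — the passage to functional equations, the converse verification, and the submodule computation — are routine. Alternatively, the same conclusion is reachable by viewing $V$ as a conformal module over the extended annihilation algebra $\textit{Lie}(\mathcal{R})^e$ and analyzing the action of its generators, in the spirit of \cite{CK,WY}.
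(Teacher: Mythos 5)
Your proof is correct, and it reaches the conclusion by a noticeably leaner route than the paper's. The paper first uses $[I_\lambda I]=0$ to show that the polynomial $g(\partial,\lambda)$ governing the $I$-action depends only on $\lambda$, then runs a degree-in-$\partial$ argument on the $[L_\lambda L]$ functional equation to force $f(\partial,\lambda)=\partial+a_0(\lambda)$, and only afterwards sets $\mu=0$ to kill $g$ and pin down $a_0(\lambda)=\alpha+\Delta\lambda$. You instead extract everything from the specialization $\mu=0$: the relation $[I_\lambda L]=\lambda I$ yields the clean dichotomy ``$Q=0$ or $P(\partial,0)=\partial+\alpha$,'' and in the second branch the $[L_\lambda L]$ identity at $\mu=0$ collapses to $\lambda P=\lambda P+\lambda Q$, so $Q=0$ in either case. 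Once $I$ acts by zero, the operators $L_\lambda$ satisfy exactly the Virasoro commutation relation on $V$, so you are entitled to quote the classification of free nontrivial rank-one Virasoro modules from \cite{CK} (restated in Section 2) instead of re-deriving the linearity of $P$ in $\partial$; similarly, your observation for part (2) that the $\mathcal{R}$-submodule lattice of $V(\alpha,\Delta)$ coincides with the Virasoro submodule lattice of $M(\alpha,\Delta)$ transfers the irreducibility criterion and the unique submodule $(\partial+\alpha)V(\alpha,0)\cong V(\alpha,1)$ verbatim, replacing the paper's direct leading-coefficient computation. The trade-off is that the paper's version is self-contained and its degree analysis is of the type reused in the extension computations of Section 4, whereas yours isolates the single point where the structure constants of $\mathcal{R}$ actually matter (the vanishing of $Q$) and outsources the rest to the known Virasoro theory.
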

\begin{proof} (1) Suppose that
\begin{eqnarray}\label{m1}
L_{\lambda} v=f(\partial, \lambda) v,\quad I_{\lambda} v=g(\partial, \lambda) v,\end{eqnarray}
where $f(\partial, \lambda)$ and $g(\partial, \lambda)$ are polynomials in $\mathbb{C}[\partial,\lambda]$. By $[I_\lambda I]=0$, we have $I_{\lambda}\left(I_{\mu} v\right)=I_{\mu}\left(I_{\lambda} v\right)$. Thus $$g(\partial+\lambda,\mu)g(\partial,\lambda)=g(\partial+\mu,\lambda)g(\partial,\mu).$$
This implies $\operatorname{deg}_{\lambda} g(\partial, \lambda)+\operatorname{deg}_{\partial} g(\partial, \lambda)=\operatorname{deg}_{\lambda} g(\partial, \lambda),$ where we use the notation $\operatorname{deg}_{\lambda} g(\partial, \lambda)$
to denote the highest degree of $\lambda$ in $g(\partial, \lambda) .$ Hence ${\rm deg}_{\partial}g(\partial,\lambda)=0,$ and we can suppose that $g(\partial, \lambda)$
$=g(\lambda)$ for some $g(\lambda) \in \mathbb{C}[\lambda].$ Then applying both sides of $\left[L_{\lambda} L\right]=(\partial+2\lambda)(L+I)$ to $v$ gives
\begin{eqnarray}
f(\partial+\lambda,\mu)f(\partial,\lambda)-f(\partial+\mu,\lambda)f(\partial,\mu)
=(\lambda-\mu)\big(f(\partial,\lambda+\mu)+g(\lambda+\mu)\big).\label{m1-1}
\end{eqnarray}
If $f(\partial,\lambda)=0$, then $g(\partial,\lambda)=g(\lambda)=0$. In this case, $V(\alpha,\Delta)$ is a trivial $\mathcal{R}$-module. A contradiction.
Thus $f(\partial,\lambda)\neq 0$. Write $f(\partial,\lambda)=\sum_{i=0}^{n} a_{i}(\lambda) \partial^{i}$ with $a_{n}(\lambda)\neq 0.$ Then, assuming $n>1,$ if we equate terms of degree $2n-1$ in $\partial$ in \eqref{m1-1}, we get
$n(\lambda-\mu) a_{n}(\lambda)a_{n}(\mu)=0$. Thus $a_{n}(\lambda)=0$,
a contradiction. Therefore, $f(\partial,\lambda)=a_{0}(\lambda)+a_{1}(\lambda) \partial.$ Plugging this back to \eqref{m1-1} and comparing the coefficients of $\partial$ in both sides gives $$(\lambda-\mu)a_1(\lambda)a_1(\mu)=(\lambda-\mu)a_1(\lambda+\mu).$$ This implies $a_1(\lambda)=1.$ Thus $f(\partial,\lambda)=\partial+a_0(\lambda)$.
Substituting this to \eqref{m1-1} gives, after simplification,
\begin{eqnarray}\label{m1-2}
(\lambda-\mu)\big(a_0(\lambda+\mu)+g(\lambda+\mu)\big)=\lambda a_0(\lambda)-\mu a_0(\mu).
\end{eqnarray}
Putting $\mu=0$ in \eqref{m1-2}, we get $g(\lambda)=0$. Thus \eqref{m1-2} becomes
\begin{eqnarray}\label{m1-3}
(\lambda-\mu)a_0(\lambda+\mu)=\lambda a_0(\lambda)-\mu a_0(\mu),
\end{eqnarray}
which implies ${\rm deg\,} a_0(\lambda)\leq 1$ and thus $a_0(\lambda)=\Delta \lambda+\alpha$ for some $\Delta,\alpha\in\C.$
Hence $f(\partial,\lambda)=\partial+\alpha+\Delta \lambda$ and $g(\partial,\lambda)=0$. This proves (1).

(2) In the case $\Delta\neq 0$, assume that $V$ is a nonzero submodule of $V(\alpha,\Delta)$. Then
there exists a nonzero element $u=f(\pa)v\in V$ for some $0\neq f(\pa)\in\C[\pa]$.
If ${\rm deg}\, f(\pa)=0$, then $v\in V$ and thus $V=V(\alpha,\Delta)$.
If ${\rm deg}\, f(\pa)=m>0$, then write $f(\partial)=a_m\partial^m+ \cdots+a_{0} \in \mathbb{C}[\partial],$ with $a_{m} \neq 0$. Thus
\begin{eqnarray}
L_{\la}u=f(\pa+\la)(L_{\la}v)=(\partial+\alpha+\Delta \lambda)f(\pa+\la)v=a_m \Delta v \lambda^{m+1}+\cdots+ {\text {lower\ terms \ of}} \ \lambda \in V[\la],
\end{eqnarray}
which gives $a_m \Delta v\in V$ and thus $v\in V$ since $a_{m} \neq 0$ and $\Delta\neq 0$. It follows $V=V(\alpha,\Delta)$. Hence   $V(\alpha,\Delta)$ is irreducible.

Conversely, it is straightforward to check that $(\partial +\alpha)V(\alpha,0)$ is a nontrivial submodule of $V(\alpha,0)$. Thus $\Delta\neq 0$ when $V(\alpha,\Delta)$ is irreducible.

Note that $(\partial +\alpha)V(\alpha,0)=\C[\partial](\partial +\alpha)v$. One has
$$L_{\lambda}((\partial+\alpha) v)=(\partial+\alpha+\lambda)(\partial+\alpha) v,\ I_{\lambda}((\partial+\alpha) v)=0.$$
Thus the map $(\partial+\alpha) V_{0, \alpha} \rightarrow V_{1, \alpha},(\partial+\alpha) v \mapsto v$ is an isomorphism of $\mathcal{R}$-modules. This proves (2).
\end{proof}

By definition, the annihilation algebra of $\mathcal{R}$
is $\textit{Lie}(\mathcal{R})^+= {\rm span}_{\C }\{L_m,I_m|m\geq -1\}$
with the following relations
\begin{eqnarray}\label{LB}
[L_m,L_{n}]=(m-n)(L_{m+n}+I_{m+n}),\ \
[L_m,I_n]=-(n+1)I_{m+n},\ \
[I_m,I_n]=0,\ \forall\ m,n\in\Z.
\end{eqnarray}
The extended annihilation algebra $\textit{Lie}(\mathcal{R})^e=\C \pa \bigoplus\textit{Lie}(\mathcal{R})^+ $ with relations in \eqref{LB} and
\begin{equation}\label{LB2}
[\partial, L_m]=-(m+1)L_{m-1}, \
[\partial, I_n]=-(n+1)I_{n-1}, \ \forall\ m,n\in\Z.
\end{equation}
For simplicity, we denote $\LL=\textit{Lie}(\mathcal{R})^e$.
Set $\mathcal{L}_n=\bigoplus\limits_{i\geqslant n}(\C L_{i}\oplus\C I_i)$ for $n \geq -1$.
Then we obtain the following filtration:
\begin{equation*}
\mathcal{L}\supset\mathcal{L}_{-1}\supset\mathcal{L}_0\cdots\supset\mathcal{L}_n\supset\cdots.
\end{equation*}
Obviously, it satisfies that $\mathcal{L}_{-1}=\textit{Lie}(\mathcal{R})^+$, $[\mathcal{L}_0, \mathcal{L}_0]=\C I_0+\mathcal{L}_1$, $[\pa,\mathcal{L}_n]=\mathcal{L}_{n-1}$ for $n\geq0$, and $\LL_N$ is
an ideal of $\LL_0$ for any $N> 0$.
\begin{lemm}\label{E1} For any fixed positive integer $N$,
$\LL_0/\LL_N$ is a finite-dimensional solvable Lie algebra.
\end{lemm}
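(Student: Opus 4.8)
The plan is to treat the two claims in the lemma separately. Finite-dimensionality is immediate: since $\mathcal{L}_N=\bigoplus_{i\geq N}(\C L_i\oplus\C I_i)$, a $\C$-basis of $\mathcal{L}_0/\mathcal{L}_N$ is given by the images of $\{L_i,I_i\mid 0\leq i\leq N-1\}$, so that $\dim_{\C}(\mathcal{L}_0/\mathcal{L}_N)=2N$. The substance of the lemma is solvability, and here I would first strip off the $I$-part, which forms an abelian ideal, and then reduce to a solvability statement about the remaining ``truncated Witt'' piece, where the derived series can be controlled by an index count.

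Concretely, I would set $\mathcal{I}=\mathrm{span}_{\C}\{I_i\mid i\geq 0\}$ and read off from \eqref{LB} that $[I_m,I_n]=0$ and $[L_m,I_n]=-(n+1)I_{m+n}\in\mathcal{I}$; hence $\mathcal{I}$ is an abelian ideal of $\mathcal{L}_0$, and its image $\bar{\mathcal{I}}$ in $\mathcal{L}_0/\mathcal{L}_N$ is an abelian ideal. Because an extension of a solvable Lie algebra by an abelian ideal is again solvable, it suffices to show that the quotient $\mathcal{Q}:=(\mathcal{L}_0/\mathcal{L}_N)/\bar{\mathcal{I}}\cong\mathcal{L}_0/(\mathcal{I}+\mathcal{L}_N)$ is solvable. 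This $\mathcal{Q}$ has basis $\{\bar L_0,\dots,\bar L_{N-1}\}$ with bracket $[\bar L_m,\bar L_n]=(m-n)\bar L_{m+n}$, where $\bar L_{m+n}=0$ whenever $m+n\geq N$.

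The key step is to bound the derived series of $\mathcal{Q}$ by tracking the smallest index that can occur. I would prove by induction on $k$ that the $k$-th derived algebra satisfies $\mathcal{D}^k\mathcal{Q}\subseteq\mathrm{span}_{\C}\{\bar L_i\mid i\geq 2^k-1\}$: the case $k=0$ is trivial since $2^0-1=0$, and for the inductive step any bracket $[\bar L_m,\bar L_n]=(m-n)\bar L_{m+n}$ with $m,n\geq 2^k-1$ is nonzero only when $m\neq n$, in which case $m+n\geq (2^k-1)+2^k=2^{k+1}-1$. Since $2^k-1\to\infty$ and $\bar L_i=0$ for $i\geq N$, as soon as $2^k-1\geq N$ we get $\mathcal{D}^k\mathcal{Q}=0$; hence $\mathcal{Q}$, and therefore $\mathcal{L}_0/\mathcal{L}_N$, is solvable.

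The main obstacle, and the reason one cannot simply invoke nilpotency, is that the filtration $\{\mathcal{L}_n\}$ only satisfies $[\mathcal{L}_0,\mathcal{L}_n]\subseteq\mathcal{L}_n$ (not $\mathcal{L}_{n+1}$), so the lower central series does not descend and $\mathcal{L}_0$ is in fact not even solvable before truncation. What rescues solvability is the \emph{doubling} of the minimal attainable index at each stage of the derived series; the crucial point to verify carefully is that the diagonal brackets $[\bar L_m,\bar L_m]$ vanish, which is exactly what forces the jump from index $2^k-1$ to $2^{k+1}-1$ rather than stalling. I would also double-check that $\bar{\mathcal{I}}$ is genuinely an ideal modulo $\mathcal{L}_N$ and that the extension-of-solvable argument is applied to a legitimate short exact sequence of Lie algebras.
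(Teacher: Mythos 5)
Your proof is correct, and it takes a somewhat different route from the paper's. The paper does not separate out the $I$-part: it works directly with the derived series of $\LL_0$, computing $\LL_0^{(1)}=\C I_0+\mathcal{L}_1$ and $\LL_0^{(2)}\subseteq\mathcal{L}_1$, and then showing by induction that $\LL_0^{(n+1)}\subseteq\mathcal{L}_n$ for all $n\geq 0$, so the derived series of $\LL_0/\LL_N$ terminates after at most $N+1$ steps. You instead first quotient by the abelian ideal spanned by the $I_i$, reduce to the truncated Witt algebra on $\bar L_0,\dots,\bar L_{N-1}$, and bound its derived series by the index-doubling estimate $2^k-1$, then invoke closure of solvability under extensions by abelian ideals. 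Both arguments ultimately rest on the same mechanism: a nonzero bracket of two basis elements with indices at least $n$ produces an index at least $2n$ (and at least $2n+1$ for the $L$-$L$ brackets, since $[L_m,L_m]=0$), so indices in the derived series strictly increase and eventually exceed $N$. Your version is more modular, gives the sharper exponential decay, and correctly explains why nilpotency is unavailable (the filtration only satisfies $[\mathcal{L}_0,\mathcal{L}_n]\subseteq\mathcal{L}_n$, and $\mathcal{L}_0$ itself is not solvable before truncation); the paper's single induction is shorter and handles the $L$'s and $I$'s simultaneously without introducing the quotient algebra. There is no gap in your argument.
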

\begin{proof} It is obvious to see that $\LL_0/\LL_N$ is of finite dimension. Consider the derived subalgebras of $\LL_0$:
$\LL_0^{(0)}=\LL_0$ and $\LL_0^{(n+1)}=[\LL_0^{(n)}, \LL_0^{(n)}]$ for $n\geqslant 0$. By the facts that $\LL_0^{(1)}=\C I_0+\mathcal{L}_1\subseteq \mathcal{L}_0$, $[I_{0},\mathcal{L}_1]\subseteq \mathcal{L}_1$, we obtain $\LL_0^{(2)}\subseteq \mathcal{L}_1$. By induction on $n$, one can obtain that $\LL_0^{(n+1)}\subset\LL_n$ for all $n\geqslant 0$. This implies the solvability property of $\LL_0/\LL_N$.
\end{proof}
\begin{theo} \label{main} Let $V$ be a finite nontrivial irreducible conformal
 $\mathcal{R}$-module. There exist some $\alpha\in\C$ and $\Delta \in\C^*$, such that 
\begin{eqnarray*}
V\cong V(\alpha,\Delta)=\mathbb{C}[\partial]v_\Delta,\ L_\lambda
v_\Delta=(\partial+\alpha+\Delta \lambda)v_\Delta, \ I_\lambda v_\Delta=0.
\end{eqnarray*}
\end{theo}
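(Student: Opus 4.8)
The plan is to reduce the classification of finite nontrivial irreducible $\mathcal{R}$-modules to a problem about the Virasoro subalgebra, using the structure of the extended annihilation algebra $\LL$ and the language of conformal modules as $\LL$-modules satisfying the local nilpotence condition \eqref{conformal}. By Lemma~\ref{I}, such a $V$ is a free $\C[\pa]$-module, and by Proposition~\ref{key1} it suffices to show that $V$ has rank one; then $V\cong V(\alpha,\Delta)$ with $\Delta\ne0$ follows from the rank-one classification already established. So the heart of the matter is to prove that an irreducible conformal $\LL$-module is free of rank one over $\C[\pa]$.

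First I would exploit the filtration $\LL\supset\LL_{-1}\supset\LL_0\supset\cdots$ together with Lemma~\ref{E1}, which says $\LL_0/\LL_N$ is finite-dimensional and solvable. The standard strategy (following \cite{CK,WY}) is to consider, for a finite conformal module $V$, the action of the solvable Lie algebra $\LL_0/\LL_N$ on $V/\pa^k V$ or on a suitable finite-dimensional quotient, and to invoke Lie's theorem to find a common eigenvector. Concretely, I expect to show that the subspace of $V$ annihilated by $\LL_N$ for large $N$ is nonzero, and that $\LL_0$ acts on an appropriate finite-dimensional space by upper-triangular matrices. This produces a nonzero vector $v\in V$ that is a common eigenvector for the torus-like element and is killed by the higher-degree part of the annihilation algebra, i.e. $I_m v=0$ and $L_m v=0$ for $m$ sufficiently large.

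Next I would translate the $\LL$-module structure back into $\la$-brackets via the generating series $L_\la v=\sum_j (L_{(j)}v)\la^j/j!$ and $I_\la v=\sum_j (I_{(j)}v)\la^j/j!$, so that the eigenvector condition becomes polynomiality of $L_\la v$ and $I_\la v$ in $\la$ with controlled degree. The key observation is that the ideal $\C[\pa]I$ of $\mathcal{R}$ acts trivially in the irreducible case: since $[I_\la I]=0$ and $I$ generates an abelian ideal, the image $I_\la V$ is an $\mathcal{R}$-submodule, so irreducibility forces either $I_\la V=0$ or $I_\la V=V$; a degree/torsion argument analogous to the one in the proof of Proposition~\ref{key1}(1) (where $\deg_\pa g=0$ forced $g=0$) rules out the latter and yields $I_\la v=0$. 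Once $I$ acts trivially, the remaining action of $L$ satisfies exactly the Virasoro $\la$-bracket relation $[L_\la L]=(\pa+2\la)L$ modulo the $I$-terms, so $V$ becomes an irreducible Virasoro conformal module, and the cited classification from \cite{CK} recorded in Section~2 gives $V\cong M(\alpha,\Delta)=V(\alpha,\Delta)$ with $\Delta\ne0$.

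The main obstacle I anticipate is the rank-reduction step: showing that the common eigenvector produced by Lie's theorem actually generates $V$ freely of rank one, rather than generating a proper submodule or sitting inside a higher-rank free module. This requires combining the local nilpotence condition \eqref{conformal} with irreducibility carefully, and controlling how $\C[\pa]$ interacts with the $\LL_0$-action—essentially verifying that the singular vector annihilated by $\LL_N$ is unique up to scalar and $\C[\pa]$-translation. The treatment of the abelian ideal $\C[\pa]I$ is comparatively clean because $[I_\la I]=0$ makes $I_\la V$ manifestly a submodule, but I would need to ensure the degree bookkeeping (mirroring the argument that $\deg_\pa g(\pa,\la)=0$ in Proposition~\ref{key1}) is airtight when $V$ has a priori unknown rank rather than the rank-one ansatz used there.
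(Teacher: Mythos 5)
Your overall architecture matches the paper's: view $V$ as a module over $\LL=\textit{Lie}(\mathcal{R})^e$, use the filtration and Lemma \ref{E1} together with Lie's theorem to produce a common eigenvector $u$ for $\LL_0$ annihilated by $\C I_0\oplus\LL_1$, and then identify the resulting rank-one module via Proposition \ref{key1}. Your proposed endgame (once $I$ acts trivially, $V$ is an irreducible Virasoro conformal module and the classification of \cite{CK} quoted in Section 2 applies) is a legitimate and arguably cleaner variant of the paper's finish, which instead shows directly that $\C[\pa]u$ is a submodule by applying Schur's lemma to the central operator $L_{-1}-\pa$. However, there is a genuine gap at the step you yourself flag as delicate, and it is not a matter of bookkeeping.

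The dichotomy is fine: the span of the coefficients of $I_\la V$ is indeed a conformal submodule, so irreducibility gives $I_\la V=0$ or $I_\la V=V$. But your proposed way of excluding $I_\la V=V$ --- ``a degree/torsion argument analogous to the one in Proposition \ref{key1}(1)'' --- cannot work as stated, because it uses only the relations $[L_\la I]=(\pa+\la)I$ and $[I_\la I]=0$. For the ordinary Heisenberg--Virasoro conformal algebra, which has these same two relations but $[L_\la L]=(\pa+2\la)L$, there exist finite irreducible rank-one modules with $I_\la v=\beta v$ and $\beta\ne 0$ (one checks $[L_\la,I_\mu]v=-\mu\beta v$ directly); there the abelian ideal acts nontrivially on an irreducible module. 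So any correct proof that $I$ acts trivially here must exploit the extra $I$-term in $[L_\la L]=(\pa+2\la)(L+I)$. Concretely, the obstruction is the central element $I_{-1}=I_{(0)}$ of $\LL$: Schur's lemma gives $I_{-1}=\beta\,{\rm id}$, and if $\beta\ne0$ then $I_\la V=V$ automatically, so the entire difficulty is proving $\beta=0$. In Proposition \ref{key1}(1) this comes from setting $\mu=0$ in equation \eqref{m1-1}, i.e.\ from the $[L_\la L]$ relation under the rank-one ansatz $L_\la v=f(\pa,\la)v$, which is unavailable before the rank is known; in the proof of Theorem \ref{main} the paper instead applies $I_{n-1}$ to a $\C[\pa]$-linear dependence among the vectors $\pa^j u$. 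Your proposal contains no step that performs this work, so the reduction to the Virasoro case is not established. (By contrast, the ``rank-reduction'' worry you single out as the main obstacle largely evaporates once $I$ acts trivially, since the Virasoro classification already handles arbitrary finite rank.)
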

\begin{proof}
By \eqref{conformal}, the conformal $\mathcal{R}$-module $V$ can be viewed as a module over $\LL$ satisfying
\begin{eqnarray}
\LL_n v=0, \ \  for \ v\in V,\ \  n\gg 0.
\end{eqnarray}
Set $V_n=\{v\in V\,|\,\LL_n v=0\}$. Then $V_n\neq \{0\}$ for $n\gg 0$.
Let $N$ be the smallest integer such that $V_N\neq\{0\}$. We have $N\geq 0$ [if not, $V$ is a trivial $\LL$-module].
Note that $I_{-1}$ is in the center of $\mathcal{L}$. By Schur's Lemma, there exists some $\beta\in\C$ such that $I_{-1} v=\beta v$ for all $v\in V$.

Suppose that $N=0$. Thus $\LL_0$ acts trivially on $V_0$. Let $u$ be a nonzero element in $V_0$. Lemma \ref{I} implies that $w_j=\partial^j u\neq0$, $\forall j\geq 0$. Since $V$ is a free $\C[\partial]$-module of finite rank, there exist $f_{1}(\partial), \ldots, f_{m}(\partial) \in \mathbb{C}[\partial]$ such that
\begin{eqnarray}\label{&&}
f_{1}(\partial) w_{1}+\cdots \cdots +f_{m}(\partial) w_{m}=0.
\end{eqnarray}
Denote $n=\max \left\{\operatorname{deg} f_{1}(\partial)+1, \ldots, \operatorname{deg} f_{m}(\partial)+m\right\}$ .
Using $ R_{\partial}=L_{\partial}-\mathrm{ad}_{\partial}$ and the binomial formula, we have, for $k \leq n+1$,
 \begin{eqnarray}
I_{n} \partial^{k}=R_{\partial}^k I_{n}= \left(L_{\partial}-\mathrm{ad}_{\partial}\right)^{k} I_{n}=\sum_{i=0}^{k} \frac{k!(n+1)!}{(k-i)!i!(n+1-i)!} \partial^{k-i} I_{n-i}. \end{eqnarray}
 Recall that $I_{-1}u=\beta u$ and $I_k u=0$ for $k\geq0$. By the action of $I_{n-1}$ on \eqref{&&}, we have $I_{-1}u=\beta u=0$ and thus $\beta=0$.
In this case, one can check that
\begin{eqnarray*}
(L_{-1}-\partial)X v=X (L_{-1}-\partial)v, \ \forall \ X\in\mathcal{L}, v\in V.
\end{eqnarray*}
By Schur's Lemma again, there exists some $\a\in\C$ such that $(L_{-1}-\partial) v=\a v$ for all $v\in V$. It follows that $\C[\pa]u$ is a submodule of $V$. Hence $V=\C[\pa]u$ by the irreducibility of $V$. The $\la$-action of $\mathcal
{R}$ on $V=\C[\pa]u$ is given by $I_\la u=0$ and
\begin{eqnarray*}
L_\la u=\sum_{j\in\Z^{+}}(L_{(j)}u)\frac{\lambda^j}{j!}=\sum_{j\in\Z^{+}}(L_{j-1} u)\frac{\lambda^j}{j!}=L_{-1} u=(\partial+\alpha)u.
\end{eqnarray*}
But then Proposition \ref{key1} implies that $V$ is reducible. This is a contradiction.

Now we have $N\geqslant 1$. By \cite[Lemma 3.1]{CK}, $V_N$ is a finite-dimensional vector space.
Note that $V_N$ is actually an $\LL_0/\LL_N$-module. By Lemma \ref{E1},
there exists a nonzero common eigenvector $u\in V_N$ under the action of $\LL_0/\LL_N$, and then the action of $\LL_0$.
Namely, there exists a linear function $\chi$ on $\LL_0$ such that $x u=\chi(x) u$ for any $x\in\LL_0$. Since $[\LL_0, \LL_0]=\C I_{0}\bigoplus \LL_1$, $I_0 u=\LL_1 u=0$.
Assume that $\chi(L_0)=\Delta$ for some $\Delta\in\C$. We have $\Delta\neq 0$ [if not, $N=0$].
With a similar discussion as in the case $N=0$, we have $I_{-1}u=0$ and there is some $\a\in\C$, such that
\begin{eqnarray}
L_{-1} v=(\pa+\a) v,\  \forall\ v\in V.
\end{eqnarray}
Therefore, $\C[\partial]u$ is a nonzero submodule of $V$. It follows $V=\C[\partial]u$. The $\la$-action of $\mathcal{R}$ on $V$ is given by $I_\la u=0$ and
\begin{eqnarray*}
L_\la u=\sum_{j\in\Z^{+}}(L_{(j)}u)\frac{\lambda^j}{j!}=\sum_{j\in\Z^{+}}(L_{j-1} u)\frac{\lambda^j}{j!}=L_{-1} u+(L_0 u)\la=(\partial+\alpha+\Delta\lambda)u.
\end{eqnarray*}
By Proposition \ref{key1}, $V\cong V(\alpha,\Delta)$ with $\Delta\neq0$. The proof is finished.
\end{proof}

\begin{rem} {\rm The modules $V(\alpha,\Delta)$ with
$\Delta\neq 0$ exhaust all finite non-1-dimensional irreducible
conformal modules over $\mathcal{R}$. Therefore, $V(\alpha,\Delta)$ with $\Delta\neq 0$, along with the one-dimensional modules $\C c_\gamma$ with $\gamma\in\C^*$, form a complete list of finite irreducible conformal $\mathcal{R}$-modules.}
\end{rem}

\section{Extensions of conformal $\mathcal{R}$-modules}

In this section, we study extensions between two finite irreducible conformal $\mathcal{R}$-modules. Let $M$ be any irreducible conformal $\mathcal{R}$-module. By Definition \ref{def1}, an $\mathcal{R}$-module structure on $M$ is given by $L_\lambda, I_\mu \in {\rm End}_\C(M)[\lambda]$ such that
\begin{eqnarray}
&&[L_\lambda, L_\mu]=(\lambda-\mu)(L_{\lambda+\mu}+I_{\lambda+\mu}), \label{L}\\
&&[L_\lambda, I_\mu]=-\mu I_{\lambda+\mu},\label{LH}\\
&&[I_\lambda, L_\mu]=\lambda I_{\lambda+\mu},\label{HL}\\
&&[\partial, L_\lambda]=-\lambda L_\lambda ,\label{a}\\
&&[\partial, I_\lambda]=-\lambda I_\lambda,\label{b}\\
&&[I_\lambda, I_\mu]=0. \label{H}
\end{eqnarray}

\subsection{Extensions involving 1-dimensional modules}

First, we consider extensions of $\mathcal{R}$-modules of the form
\begin{eqnarray}\label{w1m}
0\longrightarrow \C{c_\gamma}\longrightarrow E \longrightarrow V(\alpha,\Delta) \longrightarrow 0.
\end{eqnarray}
 As a module over $\C[\partial]$, $E$ in \eqref{w1m} is isomorphic to $\C {c_\gamma}\oplus V(\alpha,\Delta)$, where $\C {c_\gamma}$ is an $\mathcal{R}$-submodule, and $V(\alpha,\Delta)=\C[\partial]v_\Delta$ such that the following identities hold in $E$:
\begin{eqnarray}\label{cm1}
L_\lambda v_\Delta=(\partial+\alpha+\Delta\lambda)v_\Delta+f(\lambda)c_\gamma,\ I_\lambda v_\Delta=g(\lambda)c_\gamma,\ \text {for some}\  f(\lambda),\,g(\lambda)\in\C[\lambda].
\end{eqnarray}

We first give the formula of the trivial cocycles.

\begin{lemm}\label{lem1} All trivial extensions of the form \eqref{w1m} are given by \eqref{cm1}, where $f(\lambda)$ is a scalar multiple of $\alpha+\gamma+\Delta\lambda$, and $g(\lambda)=0$.
\end{lemm}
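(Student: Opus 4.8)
The plan is to work out precisely which cocycles $(f(\lambda),g(\lambda))$ arising from \eqref{cm1} are \emph{coboundaries}, i.e.\ correspond to trivial extensions. By the general discussion in Section~2, a trivial extension is one equivalent to the direct sum $\C c_\gamma\oplus V(\alpha,\Delta)$; concretely, this means there is a $\C[\partial]$-module automorphism of $E$ of the form $v_\Delta\mapsto v_\Delta + b\,c_\gamma$ (for some scalar $b$, since $\mathrm{Hom}_{\C[\partial]}(V(\alpha,\Delta),\C c_\gamma)$ consists of evaluation-type maps) that intertwines the given $\mathcal{R}$-action with the split action $L_\lambda v_\Delta=(\partial+\alpha+\Delta\lambda)v_\Delta$, $I_\lambda v_\Delta=0$. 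First I would pin down the form of this change of basis and compute how $f$ and $g$ transform under it.

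The key computation is to apply the new basis element $\tilde v_\Delta=v_\Delta+b\,c_\gamma$ and demand that in the \emph{new} coordinates the module splits. Using $\partial c_\gamma=\gamma c_\gamma$ and $L_\lambda c_\gamma=I_\lambda c_\gamma=0$, one gets
\begin{equation*}
L_\lambda \tilde v_\Delta=(\partial+\alpha+\Delta\lambda)v_\Delta+f(\lambda)c_\gamma
=(\partial+\alpha+\Delta\lambda)\tilde v_\Delta+\big(f(\lambda)-b(\alpha+\gamma+\Delta\lambda)\big)c_\gamma,
\end{equation*}
since $(\partial+\alpha+\Delta\lambda)(b\,c_\gamma)=b(\gamma+\alpha+\Delta\lambda)c_\gamma$. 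Thus the new cocycle has $f$-part $f(\lambda)-b(\alpha+\gamma+\Delta\lambda)$, and the extension is trivial exactly when this can be made to vanish, i.e.\ when $f(\lambda)=b(\alpha+\gamma+\Delta\lambda)$ for some scalar $b$. For the $I$-part, the same substitution gives $I_\lambda\tilde v_\Delta=g(\lambda)c_\gamma$ (because $I_\lambda c_\gamma=0$ and $I_\lambda$ kills $c_\gamma$), so the change of basis does not affect $g$; triviality therefore forces $g(\lambda)=0$ as well.

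The remaining point to verify is that $g(\lambda)=0$ is actually \emph{forced} for \emph{any} extension of the form \eqref{cm1}, not just the trivial ones, so that the only freedom in a coboundary lies in $f$. This I would extract from the compatibility relations \eqref{L}--\eqref{H}: applying $[I_\lambda,I_\mu]=0$ and $[L_\lambda,I_\mu]=-\mu I_{\lambda+\mu}$ to $v_\Delta$, and using $I_\lambda c_\gamma=0$, yields constraints on $g(\lambda)$. In particular the relation $[I_\lambda, L_\mu]=\lambda I_{\lambda+\mu}$ evaluated on $v_\Delta$ gives, after collecting the $c_\gamma$-coefficients, an identity of the shape $\lambda\,g(\lambda+\mu)=(\text{expression})$ whose only polynomial solution is $g\equiv 0$. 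Once $g(\lambda)=0$ is established independently, the coboundary description above shows the full set of trivial cocycles is precisely the scalar multiples of $\alpha+\gamma+\Delta\lambda$ in the $f$-slot, which is the claim.

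The main obstacle is bookkeeping rather than conceptual: one must be careful to separate the genuinely \emph{trivial} cocycles (the content of this lemma) from the determination of \emph{all} cocycles (which is Theorem~\ref{th2}-type analysis done later). The subtlety is that the action $(\partial+\alpha+\Delta\lambda)$ on $c_\gamma$ produces the specific polynomial $\alpha+\gamma+\Delta\lambda$ — one must track the substitution $\partial\mapsto\gamma$ correctly on $c_\gamma$ — and to confirm that no coboundary can alter $g$, so that $g(\lambda)=0$ in \emph{every} trivial extension follows directly. No hard analysis is needed; the argument is a direct computation with the explicit change of basis.
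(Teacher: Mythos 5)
Your proposal is correct and follows essentially the same route as the paper: both compare the given action on a shifted generator with the split action, the only cosmetic difference being that the paper writes the new generator as $v_\Delta'=\varphi(\partial)v_\Delta+a c_\gamma$ with a general polynomial $\varphi$ and then deduces $\varphi$ is a nonzero constant, whereas you take $\varphi=1$ at the outset (which the definition of equivalence of extensions justifies), and both conclude $f(\lambda)=b(\alpha+\gamma+\Delta\lambda)$ and $g(\lambda)=0$. One small caveat: your final paragraph is not needed for this lemma (your coboundary computation already forces $g=0$ for trivial extensions), and its claim that the relation $[I_\lambda,L_\mu]=\lambda I_{\lambda+\mu}$ alone forces $g\equiv0$ is not quite accurate --- when $\alpha+\gamma=0$ and $\Delta=1$ the solution $g(\mu)=a_1\mu$ survives that relation, and the paper must also invoke the $[L_\lambda,L_\mu]$ equation to eliminate it in the later classification of all cocycles.
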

\begin{proof} Suppose that \eqref{w1m} represents a trivial cocycle. This means that the exact sequence
\eqref{w1m} is split
and hence there exists $v_\Delta'=\varphi(\partial)v_\Delta+a c_\gamma\in E$, where $a\in\C$, such that
\begin{eqnarray*}
L_\lambda v_\Delta'=(\partial+\alpha+\Delta\lambda)v_\Delta'
=(\partial+\alpha+\Delta\lambda)\varphi(\partial)v_\Delta+a(\gamma+\alpha+\Delta\lambda)c_\gamma,
\end{eqnarray*}
and $I_\lambda v_\Delta'=0.$ On the other hand, it follows from \eqref{cm1} that
\begin{eqnarray*}
L_\lambda v_\Delta'&=&\varphi(\partial+\lambda)(\partial+\alpha+\Delta\lambda)v_\Delta+\varphi(\partial+\lambda)f(\lambda)c_\gamma,\\
I_\lambda v_\Delta'&=&\varphi(\partial+\lambda)g(\lambda)c_\gamma.
\end{eqnarray*}
Comparing both expressions for $L_\lambda v_\Delta'$ and $I_\lambda v_\Delta'$  respectively, we see that $\varphi(\partial)$ is a nonzero constant. Hence $f(\lambda)$ is a scalar multiple of $\alpha+\gamma+\Delta\lambda$ and $g(\lambda)=0$.
\end{proof}

Applying both sides of \eqref{L} and \eqref{LH} to $v_\Delta$, we obtain the following equations
\begin{eqnarray}
(\alpha+\gamma+\lambda+\Delta\mu)f(\lambda)-(\alpha+\gamma+\mu+\Delta\lambda)f(\mu)&=&(\lambda-\mu)\big(f(\lambda+\mu)+g(\lambda+\mu)\big),\label{l1}\\
(\alpha+\gamma+\mu+\Delta\lambda)g(\mu)&=&\mu g(\lambda+\mu).\label{lh1}
\end{eqnarray}
Therefore, solving the extension problem above is equivalent to solving the functional equations \eqref{l1} and \eqref{lh1} on the polynomials $f(\la)$ and $g(\la)$.
Letting $\lambda=0$ in \eqref{lh1}, we obtain
\begin{eqnarray}\label{I1}
(\alpha+\gamma)g(\mu)=0.
\end{eqnarray}

\textbf{Case 1.} $\alpha+\gamma\neq0$.

In this case, \eqref{I1} gives $g(\mu)=0$. Setting $\mu=0$ in \eqref{l1} gives $f(\lambda)=\frac{f(0)}{\alpha+\gamma}(\alpha+\gamma+\Delta\lambda)$. Then the corresponding extension is trivial by Lemma \ref{lem1}.

\textbf{Case 2.} $\alpha+\gamma=0$.

In this case, \eqref{l1} and \eqref{lh1} respectively reduce to
\begin{eqnarray}
(\lambda+\Delta\mu)f(\lambda)-(\mu+\Delta\lambda)f(\mu)&=&(\lambda-\mu)\big(f(\lambda+\mu)+g(\lambda+\mu)\big),\label{l11}\\
(\mu+\Delta\lambda)g(\mu)&=&\mu g(\lambda+\mu).\label{lh11}
\end{eqnarray}
Substituting $g(\lambda)=\sum_{i=0}^m a_i\lambda^i$ into \eqref{lh11} and comparing the coefficients of $\la^m$, we obtain that $m\leq1$. Then it is easy to check that $g(\lambda)=a_1 \lambda$ if $\Delta=1$, or else $g(\lambda)=0$. Putting $\mu=0$ in \eqref{l11} with $\Delta=1$, we get $a_1=f(0)=0$. Thus $g(\lambda)\equiv0$.

 Now \eqref{l11} becomes
\begin{eqnarray}
(\lambda+\Delta\mu)f(\lambda)-(\mu+\Delta\lambda)f(\mu)=(\lambda-\mu)f(\lambda+\mu).\label{l11-1}
\end{eqnarray}
 Assume that $f(\lambda)=\sum_{i=0}^n b_i\lambda^i$ with $b_n\neq0$. Plugging this into \eqref{l11-1} gives
 \begin{eqnarray}
(\la+\Delta\mu)\sum^{n}_{i=0}b_i\lambda^{i}-(\mu+\Delta\lambda)\sum^{n}_{i=0}b_i\mu^{i}=(\lambda-\mu)\sum^{n}_{i=0}b_i(\lambda+\mu)^{i}\label{2h1}.
\end{eqnarray}
Assuming $n\geqslant4$ and comparing the coefficients of $\mu^2\la^{n-1}$ in \eqref{2h1}, we have
\begin{eqnarray*}
b_n\big(\mbox{$\binom{n}{2}-\binom{n}{1}$}\big)=0,
\end{eqnarray*}
which gives $n=3$. This contradicts the assumption that $n\geqslant4$. Thus $n\leq 3.$ Then solutions of \eqref{l11-1} of degree less than or equal to 3 can be directly checked. We have the following:
\begin{lemm}\label{lem1-1} All solutions of \eqref{l11-1} are as follows:
\begin{itemize}
\item[{\rm(1)}] $f(\lambda)=0$ for $\Delta$ arbitrary.
\item[{\rm (2)}] $f(\lambda)=b_1\lambda$ for $\Delta$ arbitrary.
\item[{\rm (3)}] $f(\lambda)=b_2\lambda^2$ for $\Delta=1$.
\item[{\rm (4)}] $f(\lambda)=b_3\lambda^3$ for $\Delta=2$.

\end{itemize}
\end{lemm}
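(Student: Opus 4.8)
The plan is to treat \eqref{l11-1} as a linear functional equation in the single unknown polynomial $f$ and to read off its solution space directly. Since the paragraph preceding the statement has already established $\deg f\leq 3$, I would write $f(\lambda)=b_0+b_1\lambda+b_2\lambda^2+b_3\lambda^3$ and substitute. The structural fact I want to exploit is that both sides of \eqref{l11-1} are $\C$-linear in $f$, so its solution set is a linear subspace of the polynomials of degree at most $3$; it therefore suffices to test the four basis monomials $f(\lambda)=\lambda^i$ for $i=0,1,2,3$ separately and then superpose.

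For each $i$ I would let $T_i(\lambda,\mu)$ denote the difference of the left- and right-hand sides of \eqref{l11-1} evaluated at $f(\lambda)=\lambda^i$. A short expansion shows that $T_0=-\Delta(\lambda-\mu)$, that $T_1=0$, that $T_2=(\Delta-1)\lambda\mu(\lambda-\mu)$, and that $T_3=(\Delta-2)\lambda\mu(\lambda-\mu)(\lambda+\mu)$; in particular the nonzero ones among them are homogeneous in $(\lambda,\mu)$ of pairwise distinct total degrees $1,3,4$. This is the key step, and essentially the only one requiring care: because these polynomials have distinct total degrees they are linearly independent, so the total defect $\sum_i b_iT_i$ vanishes identically if and only if $b_0T_0$, $b_2T_2$ and $b_3T_3$ each vanish separately. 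This spares us from having to match mixed coefficients of $\lambda^a\mu^b$ by hand.

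Reading off these conditions then completes the argument. The identity $T_1=0$ shows $b_1$ is free for every $\Delta$; the relation $b_0\Delta=0$ forces $b_0=0$, since $\Delta\neq0$ by irreducibility of $V(\alpha,\Delta)$; the relation $b_2(\Delta-1)=0$ permits $b_2\neq0$ exactly when $\Delta=1$; and $b_3(\Delta-2)=0$ permits $b_3\neq0$ exactly when $\Delta=2$. Assembling the admissible monomials for each value of $\Delta$ yields precisely the four families (1)--(4). Conversely each listed $f$ is visibly a solution, since one needs only the corresponding identity $T_i=0$ under the stated restriction on $\Delta$; together with the degree bound and linearity this shows the list is exhaustive. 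I do not anticipate a genuine obstacle here---the real content is the degree reduction already carried out before the statement, after which the verification is routine---so the write-up should be short.
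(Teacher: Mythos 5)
Your proposal is correct and takes essentially the same route as the paper: the degree bound $\deg f\le 3$ is established in the paragraph before the lemma, and the paper then leaves the degree-$\le 3$ case as a direct check, which is exactly what your linearity-plus-homogeneity argument carries out (your defects $T_0=-\Delta(\lambda-\mu)$, $T_1=0$, $T_2=(\Delta-1)\lambda\mu(\lambda-\mu)$, $T_3=(\Delta-2)\lambda\mu(\lambda-\mu)(\lambda+\mu)$ are all computed correctly, and their distinct homogeneous degrees do let you decouple the coefficients). Your remark that the standing hypothesis $\Delta\neq 0$ (irreducibility of $V(\alpha,\Delta)$) is what kills $b_0$ is also on point, since for $\Delta=0$ the equation would admit constant solutions not listed in the lemma.
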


By Lemmas \ref{lem1} and \ref{lem1-1}, and the  above discussion, we have
\begin{theo}\label{p2} Nontrivial extensions of the form \eqref{w1m} exist if and only if $\alpha+\gamma=0$ and $\Delta$ is either 1 or 2. In these cases, they are given by \eqref{cm1}, where
\begin{itemize}
\item[{\rm (i)}] $\Delta=1$, $g(\lambda)=0$ and $f(\lambda)=b_2\lambda^2$, with $b_2\neq0$.
\item[{\rm (ii)}] $\Delta=2$, $g(\lambda)=0$ and $f(\lambda)=b_3\lambda^3$, with $b_3\neq0$.
\end{itemize}
The corresponding spaces
${\rm Ext}(V(\alpha,1),\C{c_{-\alpha}})$ and ${\rm Ext}(V(\alpha,2),\C{c_{-\alpha}})$ are 1-dimensional.
\end{theo}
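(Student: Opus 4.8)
The plan is to reduce the entire problem to solving the two functional equations \eqref{l1} and \eqref{lh1} for the polynomials $f(\lambda),g(\lambda)\in\C[\lambda]$, since the author has already shown that classifying extensions of the form \eqref{w1m} is equivalent to finding all such solutions, with the trivial cocycles pinned down by Lemma \ref{lem1} as exactly the scalar multiples of $\alpha+\gamma+\Delta\lambda$ (together with $g\equiv0$). First I would observe, via \eqref{I1} obtained by putting $\lambda=0$ in \eqref{lh1}, that the coefficient $\alpha+\gamma$ governs a dichotomy: if $\alpha+\gamma\neq0$, then $g\equiv0$ and setting $\mu=0$ in \eqref{l1} forces $f(\lambda)$ to be a scalar multiple of $\alpha+\gamma+\Delta\lambda$, which is precisely the trivial cocycle. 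Hence no nontrivial extension arises unless $\alpha+\gamma=0$, establishing the ``only if'' direction for the first condition.

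Next, working in the case $\alpha+\gamma=0$, I would analyze the reduced equations \eqref{l11} and \eqref{lh11}. For $g$, substituting $g(\lambda)=\sum_{i=0}^m a_i\lambda^i$ into \eqref{lh11} and comparing top-degree coefficients forces $m\le1$, and a short check shows $g$ can be nonzero only when $\Delta=1$, where $g(\lambda)=a_1\lambda$; but then putting $\mu=0$ in \eqref{l11} kills $a_1$, so in all cases $g\equiv0$. This collapses \eqref{l11} to the single homogeneous functional equation \eqref{l11-1}. The core of the argument is then the degree bound for $f$: writing $f(\lambda)=\sum_{i=0}^n b_i\lambda^i$ with $b_n\neq0$ and comparing the coefficient of $\mu^2\lambda^{n-1}$ in \eqref{2h1} under the assumption $n\ge4$ yields $b_n\bigl(\binom{n}{2}-\binom{n}{1}\bigr)=0$, forcing $n=3$ and contradicting $n\ge4$; hence $n\le3$.

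With the degree bounded by $3$, the remaining task is the finite, explicit determination of all solutions of \eqref{l11-1} of degree at most three, recorded in Lemma \ref{lem1-1}: besides $f\equiv0$ and the trivial solution $f(\lambda)=b_1\lambda$, one gets $f(\lambda)=b_2\lambda^2$ exactly when $\Delta=1$ and $f(\lambda)=b_3\lambda^3$ exactly when $\Delta=2$. Finally I would assemble the pieces: the nontrivial extensions correspond to those solutions of \eqref{l11-1} not proportional to the trivial cocycle $\Delta\lambda$ (recall $\alpha+\gamma=0$), which leaves precisely the $\lambda^2$-solution for $\Delta=1$ and the $\lambda^3$-solution for $\Delta=2$, each spanning a one-dimensional space. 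This gives both the existence criterion and the explicit cocycles in \eqref{cm1}, and shows ${\rm Ext}(V(\alpha,1),\C c_{-\alpha})$ and ${\rm Ext}(V(\alpha,2),\C c_{-\alpha})$ are one-dimensional.

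The step I expect to be the main obstacle is the degree bound for $f$: one must choose the right monomial (here $\mu^2\lambda^{n-1}$) whose coefficient comparison in \eqref{2h1} isolates a nonvanishing combinatorial factor for $n\ge4$ while leaving $n=3$ genuinely admissible, so that the bound is sharp rather than merely an estimate. The subsequent enumeration of low-degree solutions is routine but must be carried out carefully to confirm the $\Delta$-dependence that distinguishes the $\Delta=1$ and $\Delta=2$ cases.
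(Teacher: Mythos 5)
Your proposal is correct and follows essentially the same route as the paper: the dichotomy on $\alpha+\gamma$ via \eqref{I1}, the elimination of $g$, the degree bound $n\le 3$ from the $\mu^2\lambda^{n-1}$ coefficient in \eqref{2h1}, and the enumeration in Lemma \ref{lem1-1} are exactly the steps the authors carry out. No substantive differences to report.
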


Next we consider extensions of $\mathcal{R}$-modules of the form
\begin{eqnarray}\label{w2m}
0\longrightarrow V(\alpha,\Delta)\longrightarrow E \longrightarrow \C c_\gamma \longrightarrow 0.
\end{eqnarray}
As a vector space, $E$ in \eqref{w2m} is isomorphic to $V(\alpha,\Delta)\oplus\C {c_\gamma}$. Here $V(\alpha,\Delta)=\C[\partial]v_\Delta$ is an $\mathcal{R}$}-submodule and
we have
\begin{eqnarray}\label{cm2}
L_\lambda c_\gamma=f(\partial,\lambda)v_\Delta,\ \
I_\lambda c_\gamma=g(\partial,\lambda)v_\Delta,\ \
\partial c_\gamma=\gamma c_\gamma+a(\partial)v_\Delta,
\end{eqnarray}
for some $f(\partial,\lambda),\ g(\partial,\lambda)\in\C[\partial,\lambda],$ and $a(\partial)\in\C[\partial]$.

\begin{lemm}\label{lem2} All trivial extensions of the form \eqref{w2m} are given by \eqref{cm2} with
$f(\partial,\lambda)=(\pa+\alpha+\Delta\lambda)\phi(\partial+\lambda)$, $g(\partial,\lambda)=0$ and $a(\partial)=(\partial-\gamma)\phi(\partial)$, where $\phi$ is a polynomial.
\end{lemm}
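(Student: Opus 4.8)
The plan is to exploit the fact, already used in the proof of Lemma \ref{lem1}, that an extension of the form \eqref{w2m} is trivial precisely when the exact sequence splits, i.e. when there is an element $c_\gamma'\in E$ projecting to $c_\gamma$ that generates an $\mathcal{R}$-submodule isomorphic to $\C c_\gamma$ and complementary to $V(\alpha,\Delta)$. Since the projection $p$ must send $c_\gamma'$ to $c_\gamma$, the element is forced to have the normalized form $c_\gamma'=c_\gamma-\phi(\partial)v_\Delta$ for some $\phi(\partial)\in\C[\partial]$ (the only freedom is an element of the submodule $V(\alpha,\Delta)=\C[\partial]v_\Delta$). Thus triviality is equivalent to the three requirements $\partial c_\gamma'=\gamma c_\gamma'$, $L_\lambda c_\gamma'=0$ and $I_\lambda c_\gamma'=0$, and the whole task reduces to translating these into conditions on $f,g,a$.

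First I would expand each of the three conditions using the module relations \eqref{cm2} together with conformal sesquilinearity, namely $L_\lambda(\phi(\partial)v_\Delta)=\phi(\partial+\lambda)\,L_\lambda v_\Delta$ and $I_\lambda(\phi(\partial)v_\Delta)=\phi(\partial+\lambda)\,I_\lambda v_\Delta$, recalling that on the submodule one has $L_\lambda v_\Delta=(\partial+\alpha+\Delta\lambda)v_\Delta$ and $I_\lambda v_\Delta=0$. The relation $\partial c_\gamma'=\gamma c_\gamma'$ gives $a(\partial)-\partial\phi(\partial)=-\gamma\phi(\partial)$, that is $a(\partial)=(\partial-\gamma)\phi(\partial)$; the relation $L_\lambda c_\gamma'=0$ gives $f(\partial,\lambda)=(\partial+\alpha+\Delta\lambda)\phi(\partial+\lambda)$; and the relation $I_\lambda c_\gamma'=0$ gives $g(\partial,\lambda)=0$, since $I_\lambda$ annihilates $v_\Delta$. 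These are exactly the asserted shapes. For the converse I would take $f,g,a$ of the stated forms, set $c_\gamma':=c_\gamma-\phi(\partial)v_\Delta$, and verify by the same three short computations that $c_\gamma'$ meets the three conditions, so that $\C c_\gamma'$ is a complementary submodule isomorphic to $\C c_\gamma$; this exhibits the splitting and hence the triviality.

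The argument is entirely routine and contains no substantive obstacle; the only points requiring care are the bookkeeping of the argument shift $\partial\mapsto\partial+\lambda$ produced by sesquilinearity when $L_\lambda$ and $I_\lambda$ act on $\phi(\partial)v_\Delta$, and the observation that the most general splitting element is indeed of the normalized form $c_\gamma-\phi(\partial)v_\Delta$ (which holds because any section necessarily projects onto $c_\gamma$, fixing the coefficient of $c_\gamma$ to be $1$). Beyond keeping the signs and shifts consistent, I expect no genuine difficulty.
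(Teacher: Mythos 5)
Your argument is correct and is essentially the paper's own proof: both reduce triviality to the existence of a splitting element $c_\gamma'$ projecting to $c_\gamma$, expand $L_\lambda c_\gamma'$, $I_\lambda c_\gamma'$ and $\partial c_\gamma'$ via sesquilinearity, and read off the three conditions on $f$, $g$, $a$. The only cosmetic difference is that you normalize the coefficient of $c_\gamma$ to $1$ at the outset (and absorb a sign into $\phi$), whereas the paper writes $c_\gamma'=b\,c_\gamma+\phi(\partial)v_\Delta$ and rescales afterwards; you also spell out the routine converse verification, which the paper leaves implicit.
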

\begin{proof}  Suppose that \eqref{w2m} represents a trivial cocycle. This means that the exact sequence
\eqref{w2m} is split
and hence there exists $c'_\gamma=b c_\gamma+\phi(\partial)v_\Delta\in E$, where $b\in\C$, such that $L_\lambda c'_\gamma=I_\lambda c'_\gamma=0$ and $\partial c'_\gamma=\gamma c'_\gamma$. On the other hand, a short computation shows that
\begin{eqnarray*}
L_\lambda c'_\gamma&=&(\partial+\alpha+\Delta\lambda)\phi(\partial+\lambda)v_\Delta+b f(\partial,\lambda)v_\Delta,\\
I_\lambda c'_\gamma&=& b g(\partial,\lambda)v_\Delta,\
\partial c'_\gamma= b\gamma c_\gamma+\big(  b a(\partial)+\partial\phi(\partial)\big)v_\Delta.
\end{eqnarray*}
Comparing both expressions for $L_\lambda c'_\gamma$, $I_\lambda c'_\gamma$ and $\partial c'_\gamma$ respectively, we obtain the result.
\end{proof}

\begin{theo}\label{theo2}
There are nontrivial extensions of $\mathcal{R}$-modules of the form \eqref{w2m} if and only if $\alpha+\gamma=0$ and $\Delta=1$. In this case, ${\rm dim}_\C{{\rm Ext}}\big(\C{c_{-\alpha}}, (V(\alpha,1))\big)=1,$  and the unique (up to a scalar) nontrivial extension is given by
$$L_\lambda c_\gamma=c v_\Delta,\ I_\lambda c_\gamma=0,\
\partial c_\gamma=\gamma c_\gamma+c v_\Delta,$$
where $c\in\C^*$.
\end{theo}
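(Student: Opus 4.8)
The plan is to translate the extension problem \eqref{w2m} into functional equations on the data $f(\partial,\lambda)$, $g(\partial,\lambda)$ and $a(\partial)$ appearing in \eqref{cm2}, exactly as was done for the dual problem \eqref{w1m}. First I would apply the module axioms \eqref{L}--\eqref{H} to the generator $c_\gamma$, using the fact that $V(\alpha,\Delta)$ is a genuine submodule (so $L_\lambda$, $I_\lambda$ and $\partial$ act on $v_\Delta$ by the known formulas). The compatibility $[\partial,L_\lambda]=-\lambda L_\lambda$ applied to $c_\gamma$, together with $\partial c_\gamma=\gamma c_\gamma+a(\partial)v_\Delta$, yields a relation tying $a(\partial)$ to $f(\partial,\lambda)$; similarly \eqref{b} relates $a(\partial)$ to $g(\partial,\lambda)$. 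The bracket \eqref{HL} applied to $c_\gamma$ gives a first functional equation for $g$, and \eqref{L} (which on $c_\gamma$ becomes a statement about $[L_\lambda,L_\mu]c_\gamma$ versus $(\lambda-\mu)(L_{\lambda+\mu}+I_{\lambda+\mu})c_\gamma$) gives the main equation for $f$ and $g$.

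Next I would exploit \eqref{lh1}-type structure to show $g(\partial,\lambda)=0$. Concretely, the equation coming from \eqref{HL} or \eqref{LH} forces a degree constraint on $g$ in the variable $\lambda$, and evaluating at $\lambda=0$ (or $\mu=0$) produces the factor $(\alpha+\gamma)$, so that whenever $\alpha+\gamma\neq0$ one gets $g=0$ immediately, and in the critical case $\alpha+\gamma=0$ a short comparison of coefficients kills $g$ as well. Having eliminated $g$, the surviving equation for $f$ becomes a single polynomial identity in $\partial,\lambda,\mu$ of the same shape as \eqref{l11-1} but now with an extra $\partial$-dependence; I would also carry along the constraint relating $a(\partial)$ and $f(\partial,\lambda)$ that came from \eqref{a}, namely something of the form $f(\partial,\lambda)=$ (a shift of $a$) scaled by $\lambda$ plus lower corrections, which severely restricts $f$.

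From there the strategy is a degree count. Writing $f(\partial,\lambda)=\sum_i c_i(\partial)\lambda^i$ and substituting into the main functional equation, comparing top-degree coefficients in $\lambda$ (or in $\mu$) should bound the $\lambda$-degree and pin down each $c_i(\partial)$ recursively. I expect the outcome to force $\alpha+\gamma=0$ and $\Delta=1$ for any nonzero solution, with the surviving solution being $f(\partial,\lambda)=c$ constant and $a(\partial)=c$ (matching the stated answer $\partial c_\gamma=\gamma c_\gamma+cv_\Delta$). Finally I would subtract off the trivial cocycles described in Lemma \ref{lem2}: since trivial extensions correspond to $f(\partial,\lambda)=(\partial+\alpha+\Delta\lambda)\phi(\partial+\lambda)$ and $a(\partial)=(\partial-\gamma)\phi(\partial)$, I must verify that the constant solution is genuinely not of this form (it is not, because $\phi$ constant would give $f$ of degree one in $\partial$, not a constant), which establishes one-dimensionality of the Ext space.

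The main obstacle, I expect, is bookkeeping the interaction between the two relations \eqref{a} and \eqref{L} applied to $c_\gamma$: unlike the problem \eqref{w1m}, here the cocycle $f$ depends on both $\partial$ and $\lambda$, and the action of $\partial$ on $c_\gamma$ itself carries a nonzero component $a(\partial)v_\Delta$, so the sesquilinearity and Jacobi-type constraints become coupled rather than independent. Disentangling these — in particular showing that the $\partial$-degree of $f$ cannot grow and that the only consistent nonzero solution is the constant one at $\Delta=1$ — is the delicate computational heart of the argument; everything else is the same elimination-and-degree-bound template already used for Theorem \ref{p2}.
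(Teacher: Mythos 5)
Your proposal follows essentially the same route as the paper: apply \eqref{L}, \eqref{a} and \eqref{b} to $c_\gamma$ to get functional equations for $f$, $g$ and $a$, observe that $g=0$ (in the paper this is immediate from \eqref{s7}, $(\partial+\lambda-\gamma)g(\partial,\lambda)=0$, rather than via an $(\alpha+\gamma)$ factor as in the dual problem), then set $\mu=0$ in the main equation to see that any positive-degree $f$ is a coboundary in the sense of Lemma \ref{lem2}, leaving only the constant solution, which the relation \eqref{s6} between $f$ and $a$ forces to satisfy $\alpha+\gamma=0$ and $\Delta=1$. The outline is correct and matches the paper's argument.
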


\begin{proof}

Applying both sides of \eqref{L}, \eqref{a} and \eqref{b} to $c_\gamma$ gives the following functional equations:
\begin{eqnarray}
(\partial +\alpha +\Delta \lambda )f(\partial +\lambda ,\mu )-(\partial +\alpha +\Delta \mu )f(\pa +\mu ,\lambda )&=&(\lambda -\mu )\big(f(\partial ,\lambda +\mu )+g(\partial ,\lambda +\mu )\big),\nonumber\\&&\quad\quad\quad\quad\quad\quad\quad\quad\quad\quad \label{s5}\\
(\partial +\lambda-\gamma)f(\partial ,\lambda )&=&(\partial +\alpha +\Delta \lambda )a(\partial +\lambda ),\label{s6}\\
(\partial +\lambda-\gamma)g(\partial ,\lambda )&=&0.\label{s7}
\end{eqnarray}
Obviously, $g(\partial ,\lambda )=0$ by \eqref{s7}. Replacing $\partial$ by $\bar\partial=\partial+\alpha$ and letting $\bar f(\bar\partial, \lambda) = f(\bar\partial-\alpha, \lambda)$, and $\bar a(\bar\partial) = a(\bar\partial-\alpha)$, we can rewrite
\eqref{s5} and \eqref{s6} in a homogeneous form
\begin{eqnarray}
(\bar\partial+\Delta\lambda)\bar f(\bar\partial+\lambda ,\mu )-(\bar\partial +\Delta \mu )\bar f(\bar\partial+\mu ,\lambda )&=&(\lambda -\mu )\bar f(\bar \partial ,\lambda +\mu ),\label{s9}\\
(\bar\partial-\alpha +\lambda-\gamma)\bar f(\bar \partial ,\lambda )&=&(\bar\partial +\Delta \lambda )\bar a(\bar\partial +\lambda ).\label{s10}
\end{eqnarray}
Taking $\mu=0$ in \eqref{s9}, we can obtain that, if degree of $\bar f$ is positive, $\bar f(\bar\partial, \lambda)$ is a scalar multiple of $(\bar\partial+\Delta\lambda)\bar f(\bar \partial+\lambda,0)$, where $\bar f(\bar\partial+\lambda,0)$ is a polynomial in $\bar \partial+\lambda$, or else $\bar f(\bar\partial,\lambda)=c\in\C.$

Assume that $\bar f(\bar\partial, \lambda)$ is a scalar multiple of $(\bar\partial+\Delta\lambda)\bar f(\bar \partial+\lambda,0)$. Letting $\lambda=0$ in \eqref{s10}, we have $\bar a(\bar\partial)$ is a scalar multiple of $(\bar\partial-\alpha-\gamma)\bar f(\bar\partial,0)$. By Lemma \ref{lem2} and noting that we have employed a shift by $\alpha$, the corresponding extension is trivial.

It is left to consider the case $\bar f(\bar \partial,\lambda)=c$. Substituting this into \eqref{s10} gives $\bar a(\bar \partial)=\bar f(\bar \partial,\lambda)=c$, and in particular, $\alpha+\gamma=0,$ $\Delta=1$ if $c\neq 0$. The proof is finished.
\end{proof}

\subsection{Extensions of conformal $\mathcal{R}$-modules}
In this subsection, we study extensions of $\mathcal{R}$-modules of the form
\begin{eqnarray}\label{3m}
0\longrightarrow V(\bar\alpha,\bar\Delta)\longrightarrow E \longrightarrow V(\alpha,\Delta) \longrightarrow 0,
\end{eqnarray}
where $V(\bar\alpha,\bar\Delta)=\C[\partial]v_{\bar\Delta}$, $V(\alpha,\Delta)=\C[\partial]v_\Delta$, with $\Delta,\ \bar\Delta\in\C^*$.

As a $\C[\partial]$-module, $E\cong V(\bar\alpha,\bar\Delta)\oplus V(\alpha,\Delta)$. Here $V(\bar\alpha,\bar\Delta)=\C[\partial]v_{\bar\Delta}$ is an $\mathcal{R}$-submodule and we have
\begin{eqnarray}\label{3m*}
L_\lambda v_\Delta=(\partial+\alpha+\Delta\lambda)v_\Delta+f(\partial,\lambda)v_{\bar\Delta},\ I_\lambda v_\Delta=g(\partial,\lambda)v_{\bar\Delta},
\end{eqnarray}
where $f(\partial,\lambda)$ and $ g(\partial,\lambda)$ are polynomials in  $\partial$ and $\lambda$.
\begin{lemm}\label{lem4} All trivial extensions of the form \eqref{3m} are given by \eqref{3m*}, where
$f(\partial,\lambda)$ is a scalar multiple of $(\partial+\alpha+\Delta\lambda)\phi(\partial)-(\partial+\bar\alpha+\bar\Delta\lambda)\phi(\partial+\lambda)$ and $g(\partial,\lambda)=0$, with $\phi$ a polynomial.
\end{lemm}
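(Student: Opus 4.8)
The plan is to follow the pattern of Lemmas \ref{lem1} and \ref{lem2}. An extension of the form \eqref{3m} is trivial exactly when the sequence splits, i.e. when $E$ contains an $\mathcal{R}$-submodule $W$ complementary to $V(\bar\alpha,\bar\Delta)$ and isomorphic to $V(\alpha,\Delta)$. Since $\{v_{\bar\Delta},v_\Delta\}$ is a $\C[\partial]$-basis of $E$ and $p|_W\colon W\to V(\alpha,\Delta)$ is an isomorphism, a $\C[\partial]$-generator $v_\Delta'$ of $W$ must project to a generator of $V(\alpha,\Delta)$, which is a nonzero scalar multiple of $v_\Delta$. Hence I may write
\begin{equation*}
v_\Delta'=c\,v_\Delta+\phi(\partial)v_{\bar\Delta},\qquad c\in\C^{*},\ \phi\in\C[\partial],
\end{equation*}
and the splitting condition is precisely that $v_\Delta'$ carries the standard action, namely $L_\lambda v_\Delta'=(\partial+\alpha+\Delta\lambda)v_\Delta'$ and $I_\lambda v_\Delta'=0$.

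First I would compute $L_\lambda v_\Delta'$ and $I_\lambda v_\Delta'$ directly from \eqref{3m*}, using the sesquilinearity identity $a_\lambda(\phi(\partial)w)=\phi(\partial+\lambda)(a_\lambda w)$ (which comes from $a_\lambda\partial=(\partial+\lambda)a_\lambda$ on the module) together with $L_\lambda v_{\bar\Delta}=(\partial+\bar\alpha+\bar\Delta\lambda)v_{\bar\Delta}$ and $I_\lambda v_{\bar\Delta}=0$. This yields
\begin{align*}
L_\lambda v_\Delta'&=c(\partial+\alpha+\Delta\lambda)v_\Delta+\big(c\,f(\partial,\lambda)+(\partial+\bar\alpha+\bar\Delta\lambda)\phi(\partial+\lambda)\big)v_{\bar\Delta},\\
I_\lambda v_\Delta'&=c\,g(\partial,\lambda)v_{\bar\Delta}.
\end{align*}
Matching these against $(\partial+\alpha+\Delta\lambda)v_\Delta'=c(\partial+\alpha+\Delta\lambda)v_\Delta+(\partial+\alpha+\Delta\lambda)\phi(\partial)v_{\bar\Delta}$ and against $0$ respectively, and reading off the coefficient of $v_{\bar\Delta}$, I obtain $g(\partial,\lambda)=0$ (since $c\neq0$) and
\begin{equation*}
c\,f(\partial,\lambda)=(\partial+\alpha+\Delta\lambda)\phi(\partial)-(\partial+\bar\alpha+\bar\Delta\lambda)\phi(\partial+\lambda).
\end{equation*}
Absorbing the nonzero scalar $1/c$ into $\phi$ then displays $f(\partial,\lambda)$ as a scalar multiple of $(\partial+\alpha+\Delta\lambda)\phi(\partial)-(\partial+\bar\alpha+\bar\Delta\lambda)\phi(\partial+\lambda)$, as claimed; conversely, any such pair $(f,g)$ makes $v_\Delta+\phi(\partial)v_{\bar\Delta}$ generate a complementary copy of $V(\alpha,\Delta)$, so the extension is trivial.

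Since the underlying computation is a short and direct substitution, no serious obstacle arises. The only point requiring care is structural: the coefficient of $v_\Delta$ in the generator $v_\Delta'$ must be a nonzero \emph{scalar} $c$, not a general polynomial, for otherwise $\C[\partial]v_\Delta'$ would fail to map isomorphically onto $V(\alpha,\Delta)$ under $p$. The other place to stay alert is the direction of the sesquilinearity shift $\phi(\partial)\mapsto\phi(\partial+\lambda)$ under the $\lambda$-action, since an error there would corrupt the final formula for $f$.
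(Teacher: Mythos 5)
Your proof is correct and follows essentially the same route as the paper: write the splitting element as a combination of $v_\Delta$ and $\phi(\partial)v_{\bar\Delta}$, impose the standard action, and compare coefficients of $v_{\bar\Delta}$. The only (harmless) difference is that you fix the coefficient of $v_\Delta$ to be a nonzero scalar $c$ up front by a structural argument about generators of a free rank-one $\C[\partial]$-module, whereas the paper allows a polynomial coefficient $\varphi(\partial)$ and deduces its constancy from the equation $(\partial+\alpha+\Delta\lambda)\varphi(\partial)=(\partial+\alpha+\Delta\lambda)\varphi(\partial+\lambda)$.
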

\begin{proof} Suppose that \eqref{3m} represents a trivial cocycle. This means that the exact sequence
\eqref{3m} is split
and hence there exists $v_\Delta'=\varphi (\partial)v_\Delta+\phi(\partial)v_{\bar\Delta}\in E$, such that
\begin{eqnarray*}
L_\lambda v_\Delta'=(\partial+\alpha+\Delta\lambda)v_\Delta'
=(\partial+\alpha+\Delta\lambda)\big(\varphi(\partial)v_\Delta+\phi(\partial)v_{\bar\Delta}\big), \ I_\lambda v'_\Delta=0.
\end{eqnarray*}
On the other hand, a short computation shows that
\begin{eqnarray*}
L_\lambda v_\Delta'&=&L_\lambda (\varphi (\partial)v_\Delta+\phi(\partial)v_{\bar\Delta})\\
&=&\varphi(\partial+\lambda)L_\lambda v_\Delta+\phi(\partial+\lambda)L_\lambda v_{\bar\Delta}\\
&=&(\partial+\alpha+\Delta\lambda)\varphi(\partial+\lambda)v_\Delta+\big(\varphi(\partial+\lambda)f(\partial,\lambda)+\phi(\partial+\lambda)(\partial+\bar\alpha+\bar\Delta\lambda)\big)v_{\bar\Delta}.
\end{eqnarray*}
Comparing both expressions for $L_\lambda v_\Delta'$ gives
\begin{eqnarray*}
&&(\partial+\alpha+\Delta\lambda)\varphi(\partial)=(\partial+\alpha+\Delta\lambda)\varphi(\partial+\lambda),\\
&&(\partial+\alpha+\Delta\lambda)
\phi(\partial)=f(\partial,\lambda)\varphi(\partial+\lambda)+(\partial+\bar\alpha+\bar\Delta\lambda)\phi(\partial+\lambda).
\end{eqnarray*}
The former equation implies that $\varphi(\partial)$ is a nonzero complex number $c$, whereas the latter shows that $f(\partial,\lambda)$ is a scalar multiple of $(\partial+\alpha+\Delta\lambda)\phi(\partial)-(\partial+\bar\alpha+\bar\Delta\lambda)\phi(\partial+\lambda)$.
Similarly, we have
\begin{eqnarray*}
I_\lambda v_\Delta'=c g(\partial,\lambda)v_{\bar\Delta}=0,
\end{eqnarray*}
which gives $g(\partial,\lambda)=0$ since $c\neq 0$.
\end{proof}

Applying both sides of \eqref{L} and \eqref{LH} to $v_\Delta$ gives the following functional equations:
\begin{eqnarray}
(\lambda-\mu)\big(f(\partial,\lambda+\mu)+g(\partial,\lambda+\mu)\big)&=&(\partial+\lambda+\Delta\mu+\alpha)f(\partial,\lambda)+(\partial+\bar\Delta\lambda+\bar\alpha)f(\partial+\lambda,\mu)\nonumber \\&& -(\partial+\mu+\Delta\lambda+\alpha)f(\partial,\mu) -(\partial+\bar\Delta\mu+\bar\alpha)f(\partial+\mu,\lambda),\nonumber\\&&\quad\quad\quad\quad\quad\quad\quad\quad\quad\quad \label{wl}\\
-\mu g(\partial,\lambda+\mu)&=&(\partial+\bar\Delta\lambda+\bar\alpha)g(\partial+\lambda,\mu)-(\partial+\mu+\Delta\lambda+\alpha)g(\partial,\mu).\label{wlh}
\quad\end{eqnarray}
Putting $\lambda=0$ in \eqref{wlh} gives that $(\alpha-\bar\alpha)g(\partial,\mu)=0$. Thus $g(\partial,\mu)=0$ if $\alpha\neq\bar\alpha$. With this and setting $\lambda=0$ in
\eqref{wl} we obtain
\begin{eqnarray}\label{14*}
f(\partial,\mu)=\frac1{\alpha-\bar\alpha}\big((\partial+\Delta\mu+\alpha)\tilde f(\partial)-(\partial+\bar\Delta\mu+\bar\alpha)\tilde f(\partial+\mu)\big),
\end{eqnarray}
where $\tilde f(\partial)=f(\partial,0)$. By Lemma \ref{lem4}, the corresponding extension is trivial.

In the following we always assume that $\alpha=\bar\alpha$. For convenience,
put $\bar\partial=\partial+\alpha$, $\bar f(\bar\partial,\lambda)=f(\bar\partial-\alpha,\lambda),$ $\bar g(\bar\partial,\lambda)=g(\bar\partial-\alpha,\lambda)$. However, we will continue to write $\partial$ for $\bar\partial$, $f$ for $\bar f$ and $g$ for $\bar g$. Then the functional equations \eqref{wl} and \eqref{wlh} become
\begin{eqnarray}
(\lambda-\mu)\big(f(\partial,\lambda+\mu)+g(\partial,\lambda+\mu)\big)&=&(\partial+\lambda+\Delta\mu)f(\partial,\lambda)+(\partial+\bar\Delta\lambda)f(\partial+\lambda,\mu)\nonumber \\
&&-(\partial+\mu+\Delta\lambda)f(\partial,\mu)-(\partial+\bar\Delta\mu)f(\partial+\mu,\lambda),\label{L3}\\
-\mu g(\partial,\lambda+\mu)&=&(\partial+\bar\Delta\lambda)g(\partial+\lambda,\mu)-(\partial+\mu+\Delta\lambda)g(\partial,\mu).\label{H3}
\end{eqnarray}
Note that if $g(\pa,\la)=0$, then $f(\pa,\la)$ is completely determined by
\begin{eqnarray}
(\lambda-\mu)f(\partial,\lambda+\mu)&=&(\partial+\lambda+\Delta\mu)f(\partial,\lambda)+(\partial+\bar\Delta\lambda)f(\partial+\lambda,\mu)\nonumber \\&&-(\partial+\mu+\Delta\lambda)f(\partial,\mu)-(\partial+\bar\Delta\mu)f(\partial+\mu,\lambda).\label{L3-3}
\end{eqnarray}
All solutions of \eqref{L3-3}, corresponding to nontrivial cocycles, were solved in \cite{CKW1} (see also Theorem \ref{th4}).
The following result is due to \cite[Lemma 3.2]{CKW1}.
\begin{lemm}{\label{important}} Let $f(\partial,\lambda)=\sum_{i=0}^m b_i\pa^{m-i}\la^i$ be a solution of \eqref{L3-3}.
\begin{itemize}
\item[{\rm(1)}]
If $m\geq 2$ and $\Delta-\bar\Delta\neq m-1$, then $f(\partial,\lambda)$ is a scalar multiple of
$(\partial+\bar\Delta\lambda)(\partial+\lambda)^{m-1}-(\partial+\Delta\lambda)\partial^{m-1}.$
\item[{\rm(2)}]If $\Delta-\bar\Delta= m-1$ and $m\geq 3$, then $b_1=0$.
\end{itemize}
\end{lemm}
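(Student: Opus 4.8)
The plan is to convert the functional equation \eqref{L3-3} into a single algebraic identity relating the homogeneous solution $f$ to the explicit polynomial $P(\partial,\lambda)=(\partial+\bar\Delta\lambda)(\partial+\lambda)^{m-1}-(\partial+\Delta\lambda)\partial^{m-1}$, from which both assertions drop out at once. The two structural facts I will lean on are that $f$ is homogeneous of degree $m$ in $(\partial,\lambda)$ and that $\bar\Delta\neq 0$.

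First I would specialize \eqref{L3-3} at $\mu=0$. The term $\lambda f(\partial,\lambda)$ appears on both sides and cancels, and what remains collapses to $(\partial+\bar\Delta\lambda)f(\partial+\lambda,0)=(\partial+\Delta\lambda)f(\partial,0)$. Since $f(\partial,0)=b_0\partial^m$, the monomial $\lambda^{m+1}$ occurs on the left with coefficient $b_0\bar\Delta$ and does not occur on the right; using $\bar\Delta\neq 0$ this forces $b_0=0$, disposing of the leading coefficient before the main computation.

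Next I would extract the coefficient of $\mu^1$ from \eqref{L3-3}. Writing $f(\partial,\lambda+\mu)=f(\partial,\lambda)+\mu f_\lambda(\partial,\lambda)+O(\mu^2)$ and $f(\partial+\mu,\lambda)=f(\partial,\lambda)+\mu f_\partial(\partial,\lambda)+O(\mu^2)$ and reading off the degree-one-in-$\mu$ part summand by summand, the left side contributes $\lambda f_\lambda-f$ while the right side contributes $(\Delta-\bar\Delta)f+b_1\big((\partial+\bar\Delta\lambda)(\partial+\lambda)^{m-1}-(\partial+\Delta\lambda)\partial^{m-1}\big)-\partial f_\partial$, where the value $b_0=0$ is exactly what kills the stray $\partial^m$ term produced by the third summand. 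Applying Euler's identity $\partial f_\partial+\lambda f_\lambda=mf$ then cancels the derivative terms and leaves the key identity
\begin{equation*}
(m-1-\Delta+\bar\Delta)\,f(\partial,\lambda)=b_1\,P(\partial,\lambda).
\end{equation*}

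With this identity in hand the two claims are immediate. For part (1), when $\Delta-\bar\Delta\neq m-1$ the scalar $m-1-\Delta+\bar\Delta$ is nonzero, so dividing through exhibits $f$ as a scalar multiple of $P$, which is precisely the claimed polynomial. For part (2), when $\Delta-\bar\Delta=m-1$ the left-hand side vanishes identically, so $b_1 P=0$; since the coefficient of $\lambda^m$ in $P$ equals $\bar\Delta\neq 0$ (for $m\geq 2$) the polynomial $P$ is nonzero, whence $b_1=0$. The only genuinely delicate point is the bookkeeping in the $\mu^1$-extraction, namely keeping straight which summands produce $f$, $f_\lambda$, $f_\partial$, and the isolated $b_0$- and $b_1$-terms; once that is done correctly, Euler's identity does all the real work, and the hypotheses of homogeneity and $\bar\Delta\neq 0$ are exactly what make the argument close.
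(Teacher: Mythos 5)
Your argument is correct, and it is worth noting that the paper itself supplies no proof of this statement at all: it is quoted verbatim as \cite[Lemma 3.2]{CKW1}, so there is nothing internal to compare against. Your derivation is a clean, self-contained replacement. I checked the two computations that carry all the weight. Setting $\mu=0$ in \eqref{L3-3} does reduce to $(\partial+\bar\Delta\lambda)f(\partial+\lambda,0)=(\partial+\Delta\lambda)f(\partial,0)$, and the $\lambda^{m+1}$ coefficient gives $b_0\bar\Delta=0$, hence $b_0=0$ since $\bar\Delta\in\C^*$ in this setting. The $\mu^1$ extraction is also right: the four summands on the right contribute $\Delta f$, $b_1(\partial+\bar\Delta\lambda)(\partial+\lambda)^{m-1}$, $-b_1(\partial+\Delta\lambda)\partial^{m-1}-b_0\partial^m$, and $-\partial f_\partial-\bar\Delta f$ respectively, and after Euler's identity one lands on $(m-1-\Delta+\bar\Delta)f=b_1P$ exactly as you state. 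Part (1) then follows by dividing, and for part (2) your observation that the $\lambda^m$-coefficient of $P$ is $\bar\Delta\neq0$ (valid for $m\geq2$) correctly rules out $P=0$; note you could not have used the $\partial^{m-1}\lambda$-coefficient instead, since it is $(m-1+\bar\Delta-\Delta)$ and vanishes precisely under the hypothesis of part (2), so your choice of monomial matters. Compared with the coefficient-by-coefficient recursion in \cite{CKW1} (which determines each $b_i$ in terms of $b_1$ and then analyzes the resulting constraints), your route via the Euler identity gets both assertions from a single closed-form identity and in fact proves slightly more (part (2) already holds for $m=2$). The argument is complete as written.
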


We may assume from now on that $g(\pa,\la)\neq0$.
By the nature of \eqref{H3}, we may assume that a solution is a homogeneous polynomial in $\pa$ and $\la$ of degree $n$. Hence we can write $g(\pa,\la)=\sum_{i=0}^n a_i\pa^{n-i}\la^i,$  where $ a_i\in\C$.
Substituting this into \eqref{H3} gives
\begin{eqnarray}\label{co1}
-\mu\sum^{n}_{i=0}a_i\partial^{n-i}(\lambda+\mu)^i=(\partial+\bar\Delta\lambda)\sum^{n}_{i=0}a_i(\partial+\lambda)^{n-i}\mu^i-(\partial+\mu+\Delta\lambda)\sum^{n}_{i=0}a_i\partial^{n-i}\mu^i.
\end{eqnarray}
 Comparing the coefficients of $\la\mu^n$ in both sides of \eqref{co1} gives $\Delta-\bar\Delta=n$.

Assume that $n\geqslant4$.
Comparing the coefficients of $\la^{n+1}$ in \eqref{co1} gives $a_0=0$ since $\bar\Delta\neq0$. Therefore $g(\pa,0)=0$. This means that $g(\pa,\la)=\la \tilde g(\pa,\la)$, where $\tilde g(\pa,\la)=\sum_{i=1}^{n}{a_i}\pa^{n-i}\la^{i-1}$. Then \eqref{co1} amounts to
\begin{eqnarray}\label{co2}
\sum^{n}_{i=1}a_i\partial^{n-i}(\lambda+\mu)^{i}=(\partial+\mu+\Delta\lambda)\sum^{n}_{i=1}a_i\partial^{n-i}\mu^{i-1}-(\partial+\bar\Delta\lambda)\sum^{n}_{i=1}a_i(\partial+\lambda)^{n-i}\mu^{i-1}.
\end{eqnarray}
Setting $\mu=0$ in \eqref{co2} gives
\begin{eqnarray}\label{co3}
\sum^{n}_{i=1}a_i\partial^{n-i}\lambda^i=(\partial+\Delta\lambda)a_1\partial^{n-1}-(\partial+\bar\Delta\lambda)a_1(\partial+\lambda)^{n-1}.
\end{eqnarray}
We see that $a_1\neq0$ [if not, $a_i=0$ for $i=0, 1,\cdots,n$ and thus $g(\pa,\la)=0$]. Equating the coefficients of $\partial^{n-i}\lambda^i$ in \eqref{co2}, we get
\begin{eqnarray}\label{co4}
a_i=-a_1\binom{n-1}{i}-a_1\bar\Delta\binom{n-1}{i-1},\ \ 2\leqslant i \leqslant n.
\end{eqnarray}
Comparing the coefficients of $\lambda^{n-1}\mu$ in \eqref{co2} gives
$na_n=-\bar\Delta a_2$ and hence by \eqref{co4},
\begin{eqnarray}\label{co5}
n=-\mbox{$\binom{n-1}{2}$}-(n-1)\bar\Delta.
\end{eqnarray}
Similarly, comparing the coefficients of $\partial\lambda^{n-2}\mu$ in \eqref{co2} and using \eqref{co4} again, we have
\begin{eqnarray}\label{co6}
(n-1)\big(1+(n-1)\bar\Delta\big)=-\big(1+(n-2)\bar\Delta\big)\Big(\mbox{$\binom{n-1}{2}$}+(n-1)\bar\Delta\Big).
\end{eqnarray}
Combining \eqref{co5} with \eqref{co6} yields $\bar\Delta=1$. Then \eqref{co5} becomes $\mbox{$\binom{n-1}{2}$}+2n-1=0$, which certainly cannot happen because $n$ would not be an integer. Therefore $n$ can be at most $3$, and we have only the following four possible cases to discuss.
\vskip5pt
{\bf Case 1.} $n=0$.
\vskip5pt
In this case, $\Delta=\bar\Delta$ and $g(\pa,\la)=a_0 \in\C^*$. Plugging this into \eqref{L3} gives
\begin{eqnarray}
(\lambda-\mu)\big(f(\partial,\lambda+\mu)+a_0\big)&=&(\partial+\lambda+\Delta\mu)f(\partial,\lambda)+(\partial+\bar\Delta\lambda)f(\partial+\lambda,\mu)\nonumber \\
&&-(\partial+\mu+\Delta\lambda)f(\partial,\mu)-(\partial+\bar\Delta\mu)f(\partial+\mu,\lambda).\label{L4}
\end{eqnarray}
Setting $\mu=0$ in \eqref{L4} gives
\begin{eqnarray*}
a_0\lambda=(\partial+\bar\Delta\lambda)f(\partial+\lambda,0)-(\partial+\Delta\lambda)f(\partial,0),
\end{eqnarray*}
which implies that $f(\partial,0)$ is a constant and thus $a_0=0$. A contradiction. Thus there is no nontrivial solution in this case.
\vskip5pt
{\bf Case 2.} $n=1$.
\vskip5pt
 In this case, $\Delta-\bar\Delta=1$ and $g(\pa,\la)=a_1\la$ with $a_1\in\C^*$. Now \eqref{L3} becomes
\begin{eqnarray}
(\lambda-\mu)\big(f(\partial,\lambda+\mu)+a_1(\lambda+\mu)\big)&=&(\partial+\lambda+\Delta\mu)f(\partial,\lambda)+(\partial+\bar\Delta\lambda)f(\partial+\lambda,\mu)\nonumber \\
&&-(\partial+\mu+\Delta\lambda)f(\partial,\mu)-(\partial+\bar\Delta\mu)f(\partial+\mu,\lambda).\label{L5}
\end{eqnarray}
Let $f(\partial,\lambda)$ be a solution of \eqref{L5}.
Assume that the total degree of  $f(\partial,\lambda)$ in $\partial$ and $\lambda$ is $m$, and write  $f(\partial,\lambda)=\sum_{i=1}^{m}f_i(\partial,\lambda)$, where
$f_i(\partial,\lambda)$ is the $i$th homogenous bifurcation of $f(\partial,\lambda)$. Obviously, $f_1(\partial,\lambda)\neq 0$ and $f_0(\partial,\lambda)=0$.
By \eqref{L3-3} and \eqref{L5}, $f_{i}(\partial,\lambda)$ with $i\geq 2$ may satisfy \eqref{L3-3}. By Lemma \ref{lem4} and the first assertion of Lemma \ref{important}, these $f_{i}(\partial,\lambda)$ for $i\geq 3$ are trivial cocycles.
Write $f_2(\partial,\lambda)=b_{20}\partial^2+b_{21}\partial\lambda+b_{22}\lambda^2$.
Plugging this into \eqref{L3-3} we obtain that $b_{20}=b_{21}=0$. Thus  $f_2(\partial,\lambda)=b_{22}\lambda^2$. By Lemma \ref{lem4} with $\phi(\partial)=\partial$, $f_2(\partial,\lambda)$ is also a trivial cocycle. Thus we may subtract these trivial cocycles and assume that
$f(\partial,\lambda)=f_1(\partial,\lambda)$, which is a homogenous polynomial in $\partial$ and $\lambda$ of degree 1.
Letting $\mu=0$ in \eqref{L5} gives
\begin{eqnarray}
a_1\lambda^{2}=(\partial+\bar\Delta\lambda)f(\pa+\lambda,0)-(\partial+\Delta\lambda)f(\pa,0).
\end{eqnarray}
It follows that $f(\pa,0)=\frac{a_1}{\bar\Delta}\pa$. Thus we can write $f(\partial,\lambda)=\frac{a_1}{\bar\Delta}\pa+b_1\la$.
It is easy to check that it satisfies \eqref{L5}. However, by Lemma \ref{lem4}, $b_1\la$ is a trivial cocycle. Thus we may assume that
$f(\partial,\lambda)=\frac{a_1}{\bar\Delta}\pa$.
The corresponding extension is nontrivial unless $a_1=0$.
\vskip5pt
{\bf Case 3.} $n=2$.
\vskip5pt
In this case, $\Delta-\bar\Delta=2$ and $g(\pa,\mu)=a_1\pa\mu+a_2\mu^2$ with $(a_1,a_2)\neq (0,0)$. Note that \eqref{co4} holds for $n\geq 2$. Thus, by \eqref{co4}, $a_2=-\bar\Delta a_1$ and $g(\pa,\mu)=a_1(\pa-\bar\Delta\mu)\mu$. Substituting this in \eqref{L3} gives
\begin{eqnarray}
&&(\lambda-\mu)\big(f(\partial,\lambda+\mu)+a_1\pa(\lambda+\mu)-a_1\bar\Delta(\la+\mu)^2\big)\nonumber\\
&&\quad\quad\quad\quad\quad\quad\quad\quad\quad=(\partial+\lambda+\Delta\mu)f(\partial,\lambda)+(\partial+\bar\Delta\lambda)f(\partial+\lambda,\mu)\nonumber \\
&&\quad\quad\quad\quad\quad\quad\quad\quad\quad\quad-(\partial+\mu+\Delta\lambda)f(\partial,\mu)-(\partial+\bar\Delta\mu)f(\partial+\mu,\lambda).\label{L6}
\end{eqnarray}
Similarly to the discussion in the case $n=1$, we have the total degree of  $f(\partial,\lambda)$ in $\partial$ and $\lambda$ is at most $3$.
Let $f_i(\partial,\lambda)$ be the $i$th homogenous bifurcation of $f(\partial,\lambda)$, where $0\leq i \leq 3$. By \eqref{L3-3} and \eqref{L6}, each $f_{i}(\partial,\lambda)$ with $i\neq 2$ may be a solution of \eqref{L3-3} and $f_{2}(\partial,\lambda)\neq 0$ since $g(\partial,\lambda)\neq 0$. Then it is easy to check that $f_0(\partial,\lambda)=f_1(\partial,\lambda)=0$ because $\Delta-\bar\Delta=2$.

For $i=2$, write  $f_2(\partial,\lambda)=b_{20}\partial^2+b_{21}\partial\lambda+b_{22}\lambda^2$.
Plugging this into \eqref{L6} gives
$$a_1=-b_{20},\ \bar\Delta=-1,\ \Delta=1,\ b_{22}=a_1+b_{21}.$$
Thus $g(\pa,\la)=a_1(\pa+\la)\la,$ and $f_2(\pa,\la)=-a_1\pa^2+b_{21}\pa\lambda+(a_1+b_{21})\lambda^2$. By Lemma \ref{lem4} with $\phi(\partial)=\partial$, we may subtract from $f_2(\partial,\lambda)$ the trivial cocycle of the form  $b_{21}\big((\partial+\Delta\lambda)\phi(\partial)-(\partial+\bar\Delta\lambda)\phi(\partial+\lambda)\big)$, and thus assume that $f_2(\partial,\lambda)=a_1(\la^2-\partial^2)$.

For $i=3$, we can write
$f_3(\partial,\lambda)=b_{30}\partial^3+b_{31}\partial^2\lambda+b_{32}\partial\lambda^2+b_{33}\lambda^3$.
Plugging this into \eqref{L3-3} gives $b_{30}=b_{31}=0$ and thus $f_3(\partial,\lambda)=b_{32}\partial\lambda^2+b_{33}\lambda^3$. By Lemma \ref{lem4} with $\phi(\partial)=\partial^2$, we may subtract from $f_3(\partial,\lambda)$ the trivial cocycle of the form  $(2b_{33}-b_{32})\big((\partial+\Delta\lambda)\phi(\partial)-(\partial+\bar\Delta\lambda)\phi(\partial+\lambda)\big)$, and thus assume that $f_3(\partial,\lambda)=b(2\partial+\lambda)\la^2$, for some $b\in\C.$

 Hence $g(\pa,\la)=a_1(\pa+\la)\la$ and $f(\partial,\lambda)=f_2(\pa,\la)+f_3(\pa,\la)=a_1(\la^2-\partial^2)+b(2\partial+\lambda)\la^2$ for $\Delta=1$ and $\bar\Delta=-1$.
The corresponding extension is nontrivial for $b$ arbitrary and
$a_1\neq0$.

\vskip5pt
{\bf Case 4.} $n=3$.
\vskip5pt

In this case, $\Delta-\bar\Delta=3$ and  $g(\pa,\la)=a_1\pa^2\la+a_2\pa\la^2+a_3\la^3$ for $(a_1,a_2,a_3)\neq(0,0,0)$. Substituting this into \eqref{H3} and then comparing the coefficients of $ \la^3$ and $\la^2$ respectively, we have
$$a_3=-\bar\Delta a_1, a_2=-2\bar\Delta a_1-a_1, 3a_3=-\bar\Delta a_2,$$
which yields $\bar\Delta=-2$ and $\Delta=3+\bar\Delta=1$. Therefore
\begin{eqnarray}\label{&}
g(\pa,\la)=a_1\la(\pa^2+3\pa\la+2\la^2), \ a_1\neq 0.
\end{eqnarray}
As before,
let $f_i(\partial,\lambda)$ be the $i$th homogenous bifurcation of $f(\partial,\lambda)$. By Lemmas \ref {lem4} and \ref{important}, we have $0\leq i \leq 4$ into $f_3(\partial,\lambda)\neq0$.
Write
$f_3(\partial,\lambda)=b_{30}\partial^3+b_{31}\partial^2\lambda+b_{32}\partial\lambda^2+b_{33}\lambda^3$. Plugging this and \eqref{&} into \eqref{L3} with $\la=0$ gives, after simplification,
\begin{eqnarray}
-a_1(\pa^2\mu^2+3\pa\mu^3+2\mu^4)=b_{30}(3\pa^2\mu^2+5\pa\mu^3+2\mu^4).
\end{eqnarray}
It follows $a_1=-3b_{30}$, $3a_1=-5b_{30}$ and $2a_1=-2b_{30}$, leading to $a_1=0$. A contradiction. Therefore there is no nontrivial solution in this case.

\vskip5pt
According to the above discussion and Theorem \ref{th4}, we have the following theorem.
\vskip-5pt
\begin{theo}\label{theo4}
Nontrivial extensions of conformal $\mathcal{R}$-modules of the form \eqref{3m} exist only if $\alpha=\bar\alpha$ and $\Delta-\bar\Delta=0,1,2,3,4,5,6$. The following is a complete list of values of $\Delta$ and $\bar\Delta$, along with the pair of polynomials $f(\partial,\lambda)$ and $g(\partial,\lambda)$, where $\bar\partial=\partial+\alpha$:
\begin{itemize}
\item[{\rm (i)}] $\Delta=\bar\Delta,$ $g(\pa,\la)=0,$ $f(\pa,\la)=c_0+c_1\la,$ $(c_0,c_1)\neq(0,0).$
\item [{\rm (ii)}]$\Delta-\bar\Delta=1$, $g(\pa,\la)=a_1\lambda,$ $f(\pa,\la)=\frac{a_1}{\bar\Delta}\bar\pa,$ $a_1\neq0$.
\item [{\rm (iii)}]
$\Delta-\bar\Delta=2,$ $g(\pa,\la)=0$, $f(\pa,\la)=b_1\la^2(2\bar\pa+\la)$, $b_1\neq0$.
\item [{\rm (iii')}] $\Delta=1$, $\bar\Delta=-1$, $g(\pa,\la)=a_1(\bar\pa+\la)\lambda,$ $f(\pa,\la)=a_1(\la^2-\bar\pa^2)+b(2\bar\pa+\la)\la^2,$ $a_1\neq0$, $b\in\C$.
\item [{\rm (iv)}]
$\Delta-\bar\Delta=3,$ $g(\pa,\la)=0$,  $f(\pa,\la)=b_1\bar\pa\la^2(\bar\pa+\la)$, $b_1\neq 0$.
\item [{\rm (v)}]$\Delta-\bar\Delta=4,$ $g(\pa,\la)=0$,  $f(\pa,\la)=b_1\la^2(4\bar\pa^3+6\bar\pa^2\la-\bar\pa\la^2+\bar\Delta\la^3)$, $b_1\neq 0$.
\item [{\rm (vi)}]$(\Delta,\bar\Delta)=(1,-4),$ $g(\pa,\la)=0$, $f(\pa,\la)=b_1\big(\bar\pa^4\la^2-10\bar\pa^2\la^4-17\bar\pa\la^5-8\la^6\big)$, $b_1\neq 0$.
\item [{\rm (vii)}]$(\Delta,\bar\Delta)=(\frac72\pm\frac{\sqrt{19}}2, -\frac52\pm\frac{\sqrt{19}}2),$ $g(\pa,\la)=0$,   $f(\pa,\la)=b_1\big(\bar\pa^4\la^3-(2\bar\Delta+3)\bar\pa^3\la^4-3\bar\Delta\bar\pa^2\la^5-(3\bar\Delta+1)\bar\pa\la^6-(\bar\Delta+\frac9{28})\la^7\big)$, $b_1\neq 0$.
\end{itemize}
\end{theo}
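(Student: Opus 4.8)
The plan is to encode the extension \eqref{3m} in the pair of polynomials $f(\partial,\lambda)$ and $g(\partial,\lambda)$ from \eqref{3m*}, convert the module axioms into functional equations for this pair, and then organize the whole classification around whether the $I$-cocycle $g$ vanishes. First I would apply the bracket relations \eqref{L} and \eqref{LH} to $v_\Delta$. Since $I$ acts by zero on the submodule $V(\bar\alpha,\bar\Delta)$, relations \eqref{HL} and \eqref{H} impose nothing new (indeed \eqref{HL} is merely \eqref{LH} with $\lambda$ and $\mu$ interchanged), so the content reduces to the coupled system \eqref{wl}--\eqref{wlh}. Setting $\lambda=0$ in \eqref{wlh} gives $(\alpha-\bar\alpha)g(\partial,\mu)=0$, so if $\alpha\neq\bar\alpha$ then $g=0$ and \eqref{wl} forces $f$ into the shape \eqref{14*}, a trivial cocycle by Lemma \ref{lem4}. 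Hence only $\alpha=\bar\alpha$ can give nontrivial extensions, and the shift $\bar\partial=\partial+\alpha$ homogenizes the equations to \eqref{L3} and \eqref{H3}.

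The crux is to determine $g$ first. As \eqref{H3} respects the total-degree grading, I may take $g$ homogeneous of degree $n$; matching the coefficient of $\lambda\mu^n$ forces $\Delta-\bar\Delta=n$, and the remaining coefficients $a_i$ ($i\ge 2$) are fixed in terms of $a_1$ by the recursion \eqref{co4}. The decisive step is the bound $n\le 3$: assuming $n\ge 4$, the coefficient comparisons \eqref{co5} and \eqref{co6} together force $\bar\Delta=1$, after which \eqref{co5} cannot hold for any integer $n\ge 4$, a contradiction. This restricts attention to $n\in\{0,1,2,3\}$ together with the separate branch $g=0$.

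Each case is then closed by substituting the corresponding $g$ into \eqref{L3} and solving for $f$ modulo the trivial cocycles of Lemma \ref{lem4}. For $g\neq0$: the case $n=0$ forces its constant to vanish, a contradiction; $n=1$ (so $\Delta-\bar\Delta=1$) produces the cocycle (ii) after discarding trivial pieces; $n=2$ forces the resonance $(\Delta,\bar\Delta)=(1,-1)$ and, after removing the trivial cocycles built from $\phi(\partial)=\partial$ and $\phi(\partial)=\partial^2$, yields (iii'); and $n=3$ forces $(\Delta,\bar\Delta)=(1,-2)$, whereupon the $\lambda=0$ specialization of \eqref{L3} collapses to $a_1=0$, again a contradiction. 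For $g=0$, equation \eqref{L3} is precisely the Virasoro cocycle equation \eqref{L3-3}, whose nontrivial solutions are listed in Theorem \ref{th4} and supply (i), (iii), (iv), (v), (vi) and (vii); in particular (iii) and (iii') both sit at $\Delta-\bar\Delta=2$, distinguished by whether $g$ vanishes. Assembling the surviving representatives produces the stated list.

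I expect the main obstacle to be the homogeneous-bifurcation bookkeeping in the cases $n=1,2,3$. Writing $f=\sum_i f_i$ in homogeneous components, one must observe through \eqref{L3-3} and Lemma \ref{important} that every $f_i$ of degree away from the resonant degree itself solves the pure Virasoro equation, hence either vanishes or is a trivial cocycle of Lemma \ref{lem4} for a suitable monomial $\phi(\partial)$. Pinning down exactly which homogeneous degrees can appear, and subtracting precisely the right multiples of these trivial cocycles so that only the genuinely new component of $f$ remains, is the delicate part of the argument.
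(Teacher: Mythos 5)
Your proposal is correct and follows essentially the same route as the paper: the same functional equations \eqref{L3}--\eqref{H3}, the same homogeneity argument bounding $\deg g=n=\Delta-\bar\Delta\le 3$ via \eqref{co4}--\eqref{co6}, the same case-by-case elimination ($n=0$ and $n=3$ yield contradictions, $n=1$ and $n=2$ yield (ii) and (iii')), and the same reduction to Theorem \ref{th4} when $g=0$. The homogeneous-decomposition bookkeeping you flag as the delicate part is indeed exactly how the paper handles the $f_i$ components, using Lemmas \ref{lem4} and \ref{important} to strip off trivial cocycles.
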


\noindent\bf{ Acknowledgements.}\ \rm
{\footnotesize This work was supported by National Natural Science
Foundation of China (11301109) and the Research Fund for the Doctoral Program of Higher Education (20132302120042).}

\small

\end{document}